\title{Unexpected behaviour of crossing sequences}
\author{Matt DeVos \and
  Bojan Mohar\thanks{Supported in part by the
  Research Grant P1--0297 of ARRS (Slovenia), by an NSERC Discovery Grant (Canada)
  and by the Canada Research Chair program.}~\thanks{On leave from:
  IMFM \& FMF, Department of Mathematics, University of Ljubljana, Ljubljana,
  Slovenia.}
  \and
   Robert \v{S}\'{a}mal\thanks{Supported by PIMS postdoctoral fellowship.}
   \thanks{Partially supported by Institute for Theoretical Computer Science (ITI)}
   \thanks{On leave from Institute for Theoretical
     Computer Science (ITI), Charles University, Prague, Czech Republic.}
}
\newtheorem{theorem}{Theorem}[section]
\newtheorem{lemma}[theorem]{Lemma}
\newtheorem{proposition}[theorem]{Proposition}
\newtheorem{conjecture}[theorem]{Conjecture}
\newtheorem{problem}[theorem]{Problem}
\newenvironment{proof}{\par\medskip\noindent{\bf Proof:\ }}
  {\hskip 2cm\unskip\hbox{}\hfill$\Box$\par\bigskip}
\newenvironment{proofof}{\par\medskip\proofof}
  {\unskip\hfill$\Box$\par\bigskip}
\def\proofof #1{\noindent{\bf Proof of #1:\hskip 0.5em}}
\newcommand\s{{\mathcal S}}
\renewcommand\ss{{\mathbb S}}
\newcommand\calD{{\mathcal D}}
\def\Zet{{\mathbb Z}}
\def\En{{\mathbb N}}
\def\mytit#1#2{
\bigskip
{
  \leftskip=\parindent
  \noindent{\bf #1} #2
  \nopagebreak
  \par
}
\smallskip
}
\begin{document}

\date{}
\maketitle
\begin{center}
  {Department of Mathematics}\\
  {Simon Fraser University}\\
  {Burnaby, B.C. V5A 1S6} \\
  email: {\tt \{mdevos,mohar\}@sfu.ca, samal@kam.mff.cuni.cz}
\end{center}

\begin{abstract}
The $n^{th}$ crossing number of a graph $G$, denoted $cr_n(G)$, is
the minimum number of crossings in a drawing of $G$ on an orientable surface of
genus~$n$.  We prove that for every $a>b>0$, there exists a graph
$G$ for which $cr_0(G) = a$, $cr_1(G) = b$, and $cr_2(G) = 0$.
This provides support for a conjecture of Archdeacon et al.\ and
resolves a problem of Salazar.
\end{abstract}

\section{Introduction}

Planarity is ubiquitous in the world of structural graph theory, and
perhaps the two most obvious generalizations of this concept---crossing
number, and embeddings in more complicated surfaces---are topics which
have been thoroughly researched. Despite this, relatively little
work has been done on the common generalization of these two:
crossing numbers of graphs drawn on surfaces.  This subject seems to
have been introduced in \cite{Sir}, and studied further in \cite{ABS}.
Following these authors, we define for every nonnegative integer $i$ and every
graph $G$, the $i^{th}$ \emph{crossing number}, $cr_i(G)$, (and
also the $i^{th}$ \emph{nonorientable crossing number}, $\tilde{cr}_i(G)$)
to be the minimum number of crossings in a drawing of $G$ on the orientable
(nonorientable, respectively) surface of genus $i$. 
We consider drawings where each vertex $x$ of $G$ is represented by a point $\phi(x)$ 
of the suface, each edge $uv$ by a curve with ends at points $\phi(u)$ and $\phi(v)$ 
and with interior avoiding all points $\phi(x)$ for $x\in V(G)$. Moreover, we assume 
that no three edges are drawn so that they have an interior point in common. 
Observe that
$cr_i(G)=0$ (respectively, $\tilde{cr}_i(G)=0$) if and only if
$i$ is greater or equal to the genus (resp., nonorientable genus) of $G$.
This gives, for every graph $G$, two finite sequences of integers,
$(cr_0(G), cr_1(G),\ldots,0)$ and $(\tilde{cr}_0(G), \tilde{cr}_1(G),\ldots,0)$,
both of which terminate with a single zero.
The first of these is the {\it orientable crossing sequence} of $G$, the
second the {\it nonorientable crossing sequence} of~$G$.

A natural question is to characterize crossing sequences of graphs. This is the
focus of both \cite{Sir} and \cite{ABS}. If we are given a drawing of a graph in a
surface $\s$ with at least one crossing, then modifying our surface
in the neighborhood of this crossing by either adding a crosscap or a
handle gives rise to a drawing of $G$ in a higher genus surface with one
crossing less.  It follows from this that every orientable and
nonorientable crossing sequence is strictly decreasing until it
hits $0$. This necessary condition was conjectured to be sufficient
in~\cite{ABS}.

\begin{conjecture}[Archdeacon, Bonnington, and \v{S}ir\'a\v{n}]
\hfill\break If\/ $(a_1,a_2,\ldots,0)$ is a sequence of integers which
strictly decreases until $0$, then there is a graph whose crossing
sequence (nonorientable crossing sequence) is $(a_1,a_2,\ldots,0)$.
\end{conjecture}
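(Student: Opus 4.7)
The natural strategy is constructive: realize the target sequence $(a_1, a_2, \ldots, a_k, 0)$ by building a graph as a disjoint union of simpler ``blocks.'' The central combinatorial tool would be a lemma of the form
\[
cr_n(G_1 \sqcup G_2) \;=\; \min_{n_1 + n_2 = n}\bigl(cr_{n_1}(G_1) + cr_{n_2}(G_2)\bigr),
\]
whose upper bound is evident (embed the two graphs in disjoint regions of a connected-sum decomposition of the host surface), and whose lower bound rests on the fact that any drawing of a disjoint union on an orientable surface admits a simple closed curve separating the two components; cutting along such a curve splits the surface into pieces of genera summing to at most the original genus.

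With this lemma in hand, one would choose each block $H_j$ to embed on the genus-$j$ surface while carrying prescribed crossing numbers on each lower-genus surface, then tune multiplicities of the blocks so that the min-sum formula produces the desired values $a_1,\ldots,a_k$. Even in length three, however, a naive disjoint union of two ``flat'' blocks already breaks down: taking $H_1$ toroidal with planar crossing number $a-b$ and $H_2$ toroidal with planar crossing number $b$ gives $cr_0 = a$ and $cr_2 = 0$ correctly, but forces $cr_1 = \min(a-b,\,b)$, which is strictly less than $b$ whenever $a < 2b$. So either the blocks must be joined in a less trivial way (for instance through identified vertices or edges, so that the disjoint-union lemma no longer applies literally), or the individual blocks must themselves have carefully engineered non-flat crossing sequences.

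The hard part, and the reason the conjecture remains open in general, is exhibiting blocks with precisely prescribed crossing behavior across a whole family of surfaces. Crossing numbers on surfaces are notoriously difficult to pin down exactly: upper bounds come from explicit drawings, but matching lower bounds must be forced by rigid combinatorial structure, typically via Euler-formula face counts combined with the identification of unavoidable topological obstructions in every drawing. I would try to assemble blocks from highly structured families such as complete graphs, complete bipartite graphs, and higher-genus grid-like graphs, joined through designated ``interface'' vertices that allow each contribution to be analyzed locally. The three-term case proved in the paper suggests that such explicit constructions can be made to work with considerable ingenuity, and I would expect the main technical obstacle to be designing a single uniform family of blocks whose crossing numbers can be controlled simultaneously on every relevant surface rather than only on two consecutive ones.
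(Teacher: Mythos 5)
The statement you are asked about is a \emph{conjecture} in this paper: the authors do not prove it, and it remains open. What the paper actually proves is only the length-three orientable case $(a,b,0)$ (Theorem~\ref{hamburger}), and it does so by an entirely different route from yours: a single connected ``special graph'' (the hamburger graph $H_n$ and its variants $H_{n,k}$, $H_3^+$, built with thick edges and rigid vertices via Lemma~\ref{gadget_lem}), whose torus crossing number is bounded from below by an exhaustive case analysis of the homotopy types of the rows and columns in every possible drawing of the thick cycle $C+\tau_0+\tau_1$. So your proposal should be judged as an attempt at the open conjecture, and as such it has two concrete gaps beyond the one you already concede.

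First, the lower bound in your ``central combinatorial tool''
\[
cr_n(G_1 \sqcup G_2) \;=\; \min_{n_1 + n_2 = n}\bigl(cr_{n_1}(G_1) + cr_{n_2}(G_2)\bigr)
\]
is itself an open problem --- it is exactly Problem~\ref{op:disjoint} of this paper, where the authors state that ``to the best of our knowledge, this inequality might always be an equality,'' and they can verify it only for the plane and (via Lins' theorem on packings of noncontractible cycles) the projective plane (Proposition~\ref{proj_plane}). Your justification --- that any drawing of a disjoint union admits a simple closed curve separating the two components whose removal splits the genus additively --- does not hold: the two components can be drawn so that their images interlock topologically (e.g.\ each occupying noncontractible annuli around different handles that share crossings), and no crossing-free separating curve need exist a priori; producing one is precisely the difficulty. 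Second, even granting that lemma, the min-sum of ``flat'' blocks can only realize sequences with non-increasing increments $d_1 \ge d_2 \ge \cdots$, which is exactly the regime already settled by \v{S}ir\'a\v{n} \cite{Sir}; the whole content of the conjecture lies in the opposite regime (e.g.\ $(N,N-1,0)$), where, as you correctly compute, disjoint unions give $cr_1 = \min(a-b,b) < b$. At that point your proposal becomes a description of what one would \emph{like} to construct (``blocks with precisely prescribed crossing behavior across a whole family of surfaces'') rather than a construction, so there is no proof here --- consistent with the fact that the statement is still a conjecture.
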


To date, there has been very little progress on this appealing
conjecture. For the special case of sequences of the form $(a,b,0)$,
Archdeacon, Bonnington, and \v{S}ir\'a\v{n} \cite{ABS} constructed some
interesting examples for both the orientable and nonorientable
cases.  We shall postpone discussion of their examples for the
oriented case until later, but let us highlight their result for the
nonorientable case here.

\begin{theorem}[Archdeacon, Bonnington, and \v{S}ir\'a\v{n}]
\label{thm:ABS1}
If $a$ and\/ $b$ are integers with $a > b > 0$, then
there exists a graph $G$ with nonorientable crossing sequence
$(a,b,0)$.
\end{theorem}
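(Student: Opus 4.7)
The plan is to construct $G$ explicitly. As a warm-up, note that $K_5$ has nonorientable crossing sequence $(1,0,0,\ldots)$, and so a disjoint union of $a$ copies of $K_5$ has sequence $(a,a-1,a-2,\ldots,0)$. This naive construction fails because the crosscaps of higher-genus surfaces get consumed one-at-a-time by the independent components, whereas we need the two crosscaps of the Klein bottle to collectively annihilate \emph{all} of the crossings. Hence some form of amalgamation of non-planar pieces is required, arranged so that passing from genus $1$ to genus $2$ frees up many crossings simultaneously.

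My candidate construction would start with a $3$-connected graph $H$ that embeds $2$-cellularly in the Klein bottle but not in the projective plane (for instance, a suitable triangulation of the Klein bottle, or a dense graph of nonorientable genus exactly $2$). Such an $H$ has a structured face lattice in its Klein bottle embedding; choose a distinguished set of faces, and into each insert a copy of $K_5$, amalgamated with $H$ along an edge or a vertex lying on the boundary of that face. The Klein bottle embedding of $H$ keeps each distinguished face locally planar, so the full graph $G$ embeds on the Klein bottle with zero crossings, giving $\tilde{cr}_2(G)=0$. When we try to reduce the genus, $H$'s embedding must distort: each lost crosscap displaces a specific collection of the $K_5$ copies out of their safe faces and forces a crossing for each such $K_5$. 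The parameters $a$ and $b$ are then tuned by controlling the number of inserted $K_5$'s and their distribution across faces, so that exactly $a-b$ copies are displaced in going from Klein bottle to projective plane, and $b$ more in going from projective plane to the plane.

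The upper bounds would follow by writing down the drawings explicitly: take an optimal drawing of $H$ on the relevant surface and draw each displaced $K_5$ with a single crossing in whatever face is available. The main obstacle is the lower bound, namely proving $\tilde{cr}_1(G)\geq b$ and $\tilde{cr}_0(G)\geq a$. For this I would use the standard planarization argument — at each crossing, insert a new degree-$4$ vertex to obtain a proper embedding of a modified graph on the surface — and combine Euler's formula with a face-counting argument. The key structural lemma one needs is that, in any drawing of $G$ achieving the claimed number of crossings, the embedding of the subgraph $H$ is essentially forced; this rigidity is what ties the number of unavoidable crossings to the number of displaced $K_5$ gadgets. Establishing such a rigidity theorem for $H$ (say, via $3$-connectivity and a uniqueness-of-embedding result on low-genus surfaces) is the technical heart of the argument and is where I would expect to spend the bulk of the proof.
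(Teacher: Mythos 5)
First, a point of orientation: the paper does not prove Theorem~\ref{thm:ABS1} at all --- it is quoted from Archdeacon, Bonnington and \v{S}ir\'a\v{n} \cite{ABS} as motivation for the orientable analogue (Theorem~\ref{hamburger}), which is what the paper actually establishes. So there is no in-paper proof to match your argument against; your proposal has to stand on its own, and as written it does not.

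The central gap is that everything difficult is deferred. You correctly identify that the lower bounds $\tilde{cr}_0(G)\ge a$ and $\tilde{cr}_1(G)\ge b$ are the heart of the matter, but you then outsource them to an unproved ``rigidity theorem'' asserting that any near-optimal drawing of $G$ essentially forces the Klein-bottle embedding of $H$. No such statement follows from $3$-connectivity and uniqueness-of-embedding results: those results concern \emph{embeddings}, whereas an optimal drawing on the projective plane or the sphere is not an embedding of $H$ at all, and an adversarial drawing is free to cross edges of $H$ with each other, to place gadgets in entirely different faces, or to not realize any face structure of $H$ whatsoever. Indeed, the analogous rigidity for the orientable case is exactly what consumes the bulk of the present paper (the nine-type case analysis of Lemma~\ref{ham_lem}, built on special graphs with thick edges and rigid vertices precisely because ordinary graphs do not have this rigidity). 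A second, more elementary, accounting error: your crossing count only tallies ``displaced $K_5$'s,'' but if $H$ has nonorientable genus $2$ then $H$ \emph{itself} has crossings on the projective plane and the sphere, and these are nowhere controlled in your scheme; moreover your stated bookkeeping ($a-b$ copies displaced at the first step, $b$ more at the second) yields $\tilde{cr}_1(G)=a-b$, not $b$. Finally, the proposal never engages with the actual difficulty the theorem is prized for: for sequences like $(N,N-1,0)$ one must show the \emph{first} crosscap can save only one crossing while the second saves $N-1$, and a design in which $b$ independent $K_5$-gadgets each ``want'' a crosscap invites the drawer to spend the single available crosscap on rerouting so as to rescue many gadgets at once. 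Ruling that out is the whole theorem, and nothing in the sketch addresses it.
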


It has been believed by some that such a result cannot hold for the
orientable case. For the most extreme special case $(N,N-1,0)$,
where $N$ is a large integer, Salazar asked \cite{SalazarConj} if
this sequence could really be the crossing sequence of a graph. The
following
quote of Dan Archdeacon illustrates why
such crossing sequences are counterintuitive:
\begin{quote}
If $G$ has crossing sequence $(N,N-1,0)$, then adding one handle enables
us to get rid of no more than a single crossing, but by adding the second
handle, we get rid of many. So, why would we not rather add the second
handle first?
\end{quote}

Our main theorem is an analogue of Theorem \ref{thm:ABS1} for the
orientable case, and its special case $a=N$, $b=N-1$ resolves
Salazar's question~\cite{SalazarConj}.

\begin{theorem} \label{hamburger}
If\/ $a$ and\/ $b$ are integers with $a > b > 0$, then there exists a graph $G$
whose orientable crossing sequence is $(a,b,0)$.
\end{theorem}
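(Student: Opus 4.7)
The natural first attempt is to realize $G$ as a disjoint union $G_1 \sqcup G_2$ in which each $G_i$ has genus $1$ and plane crossing number $c_i$. In a plane drawing the two pieces must live in disjoint disks, so crossings are additive and $cr_0(G)=c_1+c_2$; on the torus the single handle can be attached to a region disjoint from one of the pieces, making the other piece planar while the first keeps its $c_i$ crossings, so $cr_1(G)=\min(c_1,c_2)$; and on the double torus each piece gets its own handle and $cr_2(G)=0$. Choosing $c_1=a-b$ and $c_2=b$ yields the sequence $(a,b,0)$ whenever $a\ge 2b$, so in that regime the job reduces to exhibiting, for each integer $k\ge 1$, a genus-$1$ graph with plane crossing number exactly $k$. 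Such gadgets can be assembled from copies of $K_5$ glued through a planar scaffolding, so this case should be essentially bookkeeping.

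The \emph{counterintuitive} regime is $b<a<2b$, where the disjoint-union recipe gives only $cr_1=a-b<b$. Here the plan is to \emph{couple} the two toroidal obstructions so that no single handle can save more than $a-b$ crossings. Concretely, I would take the two genus-$1$ pieces from the easy case and identify them along a small shared subgraph (for instance a single vertex or a short path, possibly with a few shared auxiliary edges) chosen so that: (i) every plane drawing still pays the full $c_1+c_2=a$ crossings; (ii) the double-torus embedding survives because each piece still fits on its own handle; and (iii) with only one handle available, the coupling forces any drawing either to leave one piece essentially untouched, or to spread the handle's benefit across the two pieces in a way that saves no more crossings than concentrating it on the larger piece would. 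The construction should be designed so that the non-contractible cycles needed for the two pieces cannot share a single handle.

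\textbf{Main obstacle.} Upper bounds on $cr_0,cr_1,cr_2$ should be routine once the construction is in hand: each is witnessed by an explicit drawing. The delicate step is the lower bound $cr_1(G)\ge b$ in the coupled regime, since that is precisely the inequality the paper is advertising as surprising. To prove it I would fix an arbitrary drawing of $G$ on the torus and analyze how a chosen non-contractible curve meets the coupled piece; by a case analysis based on the rotation system (or equivalently on Euler's formula applied to the two pieces together with the coupling subgraph) one wants to conclude that any saving in one piece is paid for by crossings in the other, so that at least $b$ crossings always remain. This rigidity is exactly what makes the \emph{second} handle disproportionately valuable, and thereby refutes the intuition quoted from Archdeacon; finding a coupling strong enough to enforce the rigidity, yet weak enough to preserve the double-torus embedding, is the heart of the argument.
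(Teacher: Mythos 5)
There is a genuine gap, in both regimes of your plan. In the ``easy'' regime $a\ge 2b$ you take $G=G_1\sqcup G_2$ with genus-$1$ pieces of planar crossing numbers $a-b$ and $b$ and assert $cr_1(G)=\min(a-b,b)$. Only the upper bound is routine. The lower bound --- that a single handle cannot be shared between the two components so as to beat $\min(c_1,c_2)$ --- is exactly the content of Problem~\ref{op:disjoint} in this paper, which the authors state as open; they prove the separation property only for the projective plane (Proposition~\ref{proj_plane}), and their own disjoint-union construction (Lemma 3.10) is used only for $b=1$, where the lower bound $cr_1\ge 1$ follows trivially from additivity of genus. For $b\ge 2$ no such trivial argument is available: each component could be drawn ``around'' the handle in the same homotopy class without the two being forced to cross, so you cannot simply cite non-contractible curves meeting. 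Your easy case therefore rests on an unproved (indeed open) claim.

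In the hard regime $b<a<2b$ you correctly identify that the entire difficulty is the lower bound $cr_1(G)\ge b$, but you supply neither a concrete coupling nor the argument: ``identify them along a small shared subgraph \dots chosen so that (i)--(iii) hold'' is a specification of what a proof would need, not a proof, and property (iii) is precisely the counterintuitive rigidity the theorem asserts. The paper's route is quite different: it builds a single connected gadget (the hamburger graph $H_{n}$, generalized to $H_{n,k}$ and $H_3^+$), made rigorous via special graphs with thick edges and prescribed rotations (Lemma~\ref{gadget_lem}), and proves $cr_1(H_n)=n-1$ by cutting the torus along the uncrossed thick cycle $C$, classifying the nine possible embeddings of $C+\tau_0+\tau_1$ (Lemma~\ref{finite_hamburger}), and then running an induction on $n$ over the homotopy types of the column curves, with several exceptional small drawings handled explicitly. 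None of that machinery, or a substitute for it, appears in your proposal, so the heart of the theorem remains unestablished.
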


Quite little is known about constructions of graphs for more general
crossing sequences.  Next we shall discuss the only such construction we
know of.  Consider a sequence ${\bf a} = (a_0, a_1, \ldots,a_g)$ and define
the sequence $(d_1,\ldots,d_g)$ by the
rule $d_i = a_{i-1} - a_i$.  If ${\bf a}$ is the crossing sequence of a graph, then, roughly speaking, $d_i$ is the number of crossings which can be saved by adding the $i^{th}$ handle.
It seems intuitively clear that sequences for which
$d_1 \ge d_2 \ge \cdots \ge d_g$ should be crossing sequences, since here we
receive diminishing returns for each extra handle we use. Indeed,
\v{S}ir\'a\v{n} \cite{Sir} constructed a graph with crossing sequence
${\bf a}$ whenever $d_1 \ge d_2 \ge \cdots \ge d_g$.

Constructing graphs for sequences which violate the above condition
is rather more difficult.  For instance, it was previously open
whether there exist graphs with crossing sequence $(a,b,0)$ where
$a/b$ is arbitrarily close to~$1$.  The most extreme examples are
due to Archdeacon, Bonnington and \v{S}ir\'a\v{n} \cite{ABS} and
have $a/b$ approximately equal to $6/5$. Although our main theorem
gives us a graph with every possible crossing sequence of the form
$(a,b,0)$, we don't know what happens for longer sequences.  In
particular, it would be nice to resolve the following problem which
asks for graphs where the first $s$ handles save only an epsilon
fraction of what is saved by the $s+1^{st}$ handle.

\begin{problem} \label{op:longseq}
For every positive integer $s$ and every $\varepsilon > 0$, construct
a graph~$G$ for which
$cr_0(G) - cr_s(G) \le \varepsilon \left( cr_s(G) - cr_{s+1}(G) \right)$.
\end{problem}

For graph embeddings, the genus of a disconnected graph is the sum
of the genera of its connected components.  For drawing, this
situation is presently unclear.  If we have a graph which is a
disjoint union of $G_1$ and $G_2$, then we can always ``use part of
the surface for~$G_1$ and the other part for~$G_2$'', leading to
$$
  cr_i(G_1 \cup G_2) \le \min_j \bigl( cr_j(G_1) + cr_{i-j}(G_2) \bigr) \,.
$$
To the best of our knowledge, this inequality might always be an
equality.  More generally we shall pose the following problem.

\begin {problem}   \label{op:disjoint}
Let $G$ be a disjoint union of the graphs $G_1$ and $G_2$, and let
${\mathcal S}$ be a (possibly nonorientable) surface. Is there an
optimal drawing of\/ $G$ on ${\mathcal S}$, such that no edge of\/
$G_1$ crosses an edge of\/~$G_2$?
\end {problem}

This problem is trivially true when ${\mathcal S}$ is the plane, but
it also holds when ${\mathcal S}$ is the projective plane:  

\begin{proposition}\label{proj_plane}
Let $G$ be a disjoint union of the graphs $G_1$ and $G_2$. Then
$$
  \tilde{cr}_1(G) = \min \{ \tilde{cr}_1(G_1) + {cr}_0(G_2), {cr}_0(G_1) + \tilde{cr}_1(G_2) \} \,.
$$
In other words, there is an optimal drawing of $G$ where planar drawing
of $G_2$ is put into one of the regions defined by the drawing of $G_1$;
or vice versa.
\end{proposition}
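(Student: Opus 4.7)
The inequality $\tilde{cr}_1(G) \le \min\{\tilde{cr}_1(G_1) + cr_0(G_2), cr_0(G_1) + \tilde{cr}_1(G_2)\}$ follows by direct construction: take an optimal drawing of $G_1$ on the projective plane, pick any face of the resulting planarization, and insert an optimal planar drawing of $G_2$ inside an embedded open disk contained in that face; this yields a drawing of $G$ with $\tilde{cr}_1(G_1) + cr_0(G_2)$ crossings, and the other term arises symmetrically by swapping the roles of $G_1$ and $G_2$.

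For the reverse inequality, fix an optimal drawing $D$ of $G$ with $c = \tilde{cr}_1(G)$ crossings and decompose $c = c_1 + c_2 + c_{12}$, where $c_i$ counts self-crossings of $G_i$ and $c_{12}$ counts crossings between a $G_1$-edge and a $G_2$-edge. Let $X_i$ denote the image of $G_i$'s drawing in the projective plane. Combined with the trivial bounds $c_i \ge \tilde{cr}_1(G_i)$, it suffices to show that $c_j + c_{12} \ge cr_0(G_j)$ for some $j \in \{1,2\}$. The key topological observation is that since $H_1$ of the projective plane with $\mathbb{Z}/2$-coefficients is $\mathbb{Z}/2$, a compact $1$-complex embedded in the projective plane lies inside an embedded open disk if and only if every one of its cycles is null-homotopic; equivalently, $X_i$ lies in a disk iff every cycle of $G_i$ is drawn as a contractible simple closed curve. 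When $X_j$ lies in such a disk, $D$ restricts to a planar drawing of $G_j$ with exactly $c_j$ crossings, yielding $cr_0(G_j) \le c_j \le c_j + c_{12}$.

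The main case is when both $X_1$ and $X_2$ contain essential simple closed curves $\gamma_1$ and $\gamma_2$; since any two essential curves on the projective plane must intersect, $c_{12} \ge 1$. To obtain $cr_0(G_1) \le c_1 + c_{12}$ in this case, I cut the projective plane along $\gamma_2$ to obtain an open disk $D'$ whose boundary is a double cover of $\gamma_2$. The restriction of $G_1$'s drawing to $D'$ is a planar drawing of $G_1$ with $c_1$ self-crossings and $2k$ dangling ends arranged in $k$ antipodal pairs on $\partial D'$, where $k = |G_1 \cap \gamma_2| \le c_{12}$. Reconnecting these ends through an auxiliary disk $D''$ glued to $\partial D'$ produces a planar drawing of $G_1$; since antipodally paired chords in $D''$ pairwise cross, this contributes at most $\binom{k}{2}$ additional crossings, giving $cr_0(G_1) \le c_1 + \binom{k}{2}$. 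The hard part will be closing the gap between $\binom{k}{2}$ and $c_{12}$: by choosing $\gamma_2$ so as to minimize $k$ and invoking either an optimality-preserving rerouting argument or the symmetric bound obtained by cutting along $\gamma_1$ instead, one should force the extra crossings to be at most $c_{12}$ in at least one of the two symmetric directions, thereby establishing one of $cr_0(G_1) \le c_1 + c_{12}$ or $cr_0(G_2) \le c_2 + c_{12}$ and concluding the proof.
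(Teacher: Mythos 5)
Your first inequality and your degenerate case (some $X_j$ contained in a disk) are fine, and your reduction of the converse to showing $cr_0(G_j)\le c_j+c_{12}$ for some $j$ is the right target. But the main case contains a genuine gap, and it is exactly the one you flag yourself. Cutting along a single essential curve $\gamma_2\subseteq X_2$ gives you $cr_0(G_1)\le c_1+\binom{k}{2}$ while your only lower bound on the cross-crossings is $c_{12}\ge k$: the cost of the re-drawing is quadratic in $k$ but your lower bound is linear, so nothing forces $\binom{k}{2}\le c_{12}$. Neither of your proposed remedies closes this. Minimizing $k$ over essential curves \emph{inside} $X_2$ does not help (that minimum can still be large, and is in general larger than the minimum over all essential curves); and the symmetric bound only yields $\min\bigl\{\binom{k}{2},\binom{k'}{2}\bigr\}$ against $c_{12}\ge\max\{k,k'\}$, which is still not enough.

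The paper closes the gap with a \emph{product} lower bound on $c_{12}$, obtained from a packing theorem rather than from a single curve. After planarizing the self-crossings of each $G_i$, let $a_i$ be the length of a shortest noncontractible cycle in the dual graph of the embedding of $G_i$ (so $a_i\ge 2$, since otherwise $G_i$ is planar and the degenerate case applies), and assume $a_1\le a_2$. By a theorem of Lins there is a half-integral packing of noncontractible cycles in $G_i$ of total weight $a_i$; since any two noncontractible closed curves in the projective plane intersect, and each edge carries total weight at most one, every cycle of the first packing must cross every cycle of the second, giving $c_{12}\ge a_1a_2$. On the other hand, cutting the embedding of $G_1$ along a shortest noncontractible dual cycle (not along a curve constrained to lie in $X_2$) and reconnecting the $a_1$ severed edges through a disk costs only $\binom{a_1}{2}=\tfrac12 a_1(a_1-1)<a_1a_2$ crossings, so re-drawing $G_1$ in a face of $G_2$ strictly improves the drawing, contradicting optimality. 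This min--max/packing step is the missing ingredient; without it (or some substitute giving a superlinear lower bound on $c_{12}$ in terms of $a_1$), your argument does not go through.
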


\begin{proof}
To see this, consider an optimal drawing of $G$ on the 
projective plane, and suppose (for a contradiction) that some edge
of $G_1$ crosses an edge of $G_2$.  If there is a crossing involving
two edges in $G_1$, then by creating a new vertex at this crossing
point, we obtain an optimal drawing of this new graph. Continuing in
this manner, we may assume that both $G_1$ and $G_2$ are
individually embedded in the projective plane.  For $i=1,2$, let
$a_i$ be the length of a shortest noncontractible cycle in the dual
graph of the embedding of $G_i$.  Note that $a_i \ge 2$ as otherwise
$G_i$ embeds in the plane, so $G$ embeds in the projective plane.
Assume (without loss) that $a_1 \le a_2$.  Now, it follows from a
theorem of Lins \cite{Lins} that there exists a half-integral packing
of noncontractible cycles in $G_i$ with total weight $a_i$ for
$i=1,2$. Since any two noncontractible curves in the projective
plane meet, it follows that the total number of crossings in this
drawing is at least $a_1 a_2$. However, we can draw $G$ in the
projective plane by embedding $G_2$ and then drawing $G_1$ in a face
of this embedding with a total of ${a_1 \choose 2} =
\frac{1}{2}a_1(a_1 - 1) < a_1 a_2$ crossings, a contradiction.
\end{proof}

Our primary family of graphs used in proving Theorem~\ref{hamburger}
can be constructed with relatively
little machinery, so we shall introduce them here. We will however use a couple
of gadgets which are common in the study of crossing numbers (\cite{ABS,Pels}). 
Let us pause here to define
them precisely. A \emph{special graph} is a graph $G$ together with a
distinguished subset $T \subseteq E(G)$ of \emph{thick} edges, a
subset $U \subseteq V(G)$ of \emph{rigid} vertices and a family
$\{ \pi_u \}_{u \in U}$ of prescribed \emph{local rotations}
for the rigid vertices. Here, $\pi_u$ describes the cyclic ordering of
the ends of edges incident with $u$.
A \emph{drawing} of a special graph $G$ in a surface $\Sigma$ is a drawing
of the underlying graph $G$ with the added property that for
every $u \in U$, the local rotation of the edges incident with $u$ given
by this drawing either in the local clockwise or counterclockwise
order matches $\pi_u$.  The \emph{crossing number} of a
drawing of the special graph $G$ is $\infty$ if there is an edge in
$T$ which contains a crossing, and otherwise it is the same as the
crossing number of the drawing of the underlying graph.  We define the
\emph{crossing number} of a special graph $G$ in a surface $\Sigma$ to be the
minimum crossing number of a drawing of $G$ in $\Sigma$, and
$cr_i(G)$ to be the crossing number of $G$ in a surface of genus
$i$.  In the next section, we shall prove the following result.

\begin{lemma}\label{gadget_lem}
If $G$ is a special graph with crossing sequence ${\bf a}$ consisting of real numbers, then
there exists an (ordinary) simple graph with crossing sequence ${\bf a}$.
\end{lemma}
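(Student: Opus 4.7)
The plan is to construct the ordinary simple graph $H$ from the special graph $G$ by local substitution: each thick edge is replaced by a ``thick bundle'' of many parallel subdivided paths, and each rigid vertex is replaced by a 3-connected planar gadget (a wheel will do). Fix a large integer $N > cr_0(G)$. For a thick edge $e = uv$, insert $N$ internally disjoint $uv$-paths of length two, so that any transverse crossing of $e$ in the original drawing now costs at least $N$ crossings in $H$. For a rigid vertex $u$ with rotation $\pi_u = (e_1, \ldots, e_d)$, take a wheel with hub $u^\star$ and rim $w_1 w_2 \cdots w_d w_1$ arranged in the order $\pi_u$, re-attaching each original edge $e_i$ to $w_i$; to prevent rotation-scrambling on surfaces of positive genus, also replace each spoke and rim edge of the wheel by its own thick bundle. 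Finally, subdivide any remaining multi-edges or loops to obtain a simple graph $H$.

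To prove $cr_i(H) = cr_i(G)$ for every $i$, one direction is immediate: any drawing of $G$ on the genus-$i$ surface becomes a drawing of $H$ with the same crossing count by placing each gadget inside a small disk neighborhood, since the gadgets are planar. For the converse, consider a drawing of $H$ with $k \le cr_i(G) < N$ crossings. No edge outside a thick bundle can cross into it transversally (that would cost at least $N$ crossings), so each bundle lies in an embedded disk and can be contracted back to a single thick edge without changing the crossing count. Similarly, each wheel gadget lies in an embedded disk with no internal crossings, and by Whitney's theorem on the essential uniqueness of embeddings of 3-connected planar graphs, its combinatorial embedding is determined up to reflection; hence the induced cyclic order of the re-attached edges around $u^\star$ matches $\pi_u$ up to reversal, which is permitted by the definition of a drawing of a special graph.

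The main obstacle is confirming that the rigid-vertex gadgets really force the prescribed rotation even when the ambient surface has positive genus, where 3-connected planar graphs can a priori admit non-planar embeddings that permute the cyclic order at $u^\star$. This is precisely what the thickening of the wheel's own edges rules out: any embedding that ``uses'' a handle would have to route some gadget edge across another gadget bundle, costing more than $N$ crossings and thus contradicting optimality. Once this is in place, the remaining steps --- choosing $N$ large enough in terms of $cr_0(G)$, verifying that subdivisions preserve the crossing sequence, and handling degenerate low-degree cases --- are routine and of the sort familiar from \cite{ABS, Pels}.
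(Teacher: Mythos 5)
Your thick-edge gadget coincides with the paper's (replace each thick edge by $N>cr_0(G)$ internally disjoint paths of length two), and that part is essentially sound, although the justification is off: a foreign edge \emph{can} cross into the bundle cheaply by crossing just one or two of its paths, and the bundle need not lie in an embedded disk. What the construction actually gives you, by averaging over the $N$ paths in a drawing with fewer than $N$ crossings, is one completely uncrossed path, next to which the whole bundle can be redrawn; that is all that is needed, and it is how the paper argues.

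The genuine gap is the rigid-vertex gadget. Your key claim --- that an embedding of the wheel which ``uses a handle'' must route some gadget edge across a thick bundle and hence incur at least $N$ crossings --- is false: a noncontractible embedding costs nothing. On the torus one can draw the rim as a noncontractible simple closed curve, place the hub beside it and draw all spokes on one side with no crossings at all. The complement of the rim is then an annulus rather than a disk, Whitney's theorem does not apply, and each external edge $e_i$ may leave $w_i$ on either side of the rim independently; for $\pi_u=(e_1,e_2,e_3,e_4)$, say, the edges $e_1,e_3$ can emanate on one side and $e_2,e_4$ on the other --- precisely the configuration the rigid vertex is meant to forbid. Since the rigid vertices in this paper exist exactly to force rows and columns to cross \emph{on the torus}, a gadget that can be defeated crossing-free on the torus cannot carry the proof. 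The paper's construction is designed around this obstruction: it surrounds $v$ by $n=3g+2$ nested thick cycles, so that among the $n-1$ inner uncrossed disjoint cycles either one is contractible or two are freely homotopic (this is where \cite[Proposition 4.2.6]{MT}, bounding the number of pairwise disjoint, pairwise non-homotopic noncontractible simple closed curves on a surface of Euler genus $g$, is used), and in either case the drawing can be modified, without adding crossings, into a canonical one in which the gadget sits inside a disk. A single rim cycle provides neither alternative, so to repair your argument you would need to adopt the nested-cycle gadget (or something equivalent) rather than a lone wheel.
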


This result permits us to use special graphs in our constructions.
Indeed, starting in the third section, we shall consider special
graphs on par with ordinary ones, and we shall drop the term
special.  When defining a (special) graph with a diagram, we shall
use the convention that thick edges are drawn thicker, and vertices
which are marked with a box instead of a circle have the
distinguished rotation scheme as given by the figure. With this
terminology, we can now introduce our principal family of graphs.

\begin {figure}
\centerline{\includegraphics[width=7.5cm]{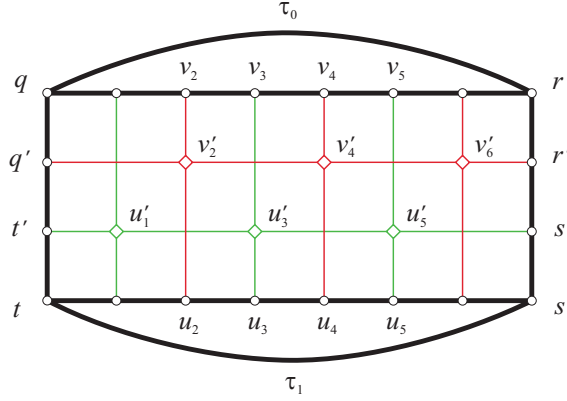}}
\caption{The graph~$H_n$ (for $n=6$).}
\label{fig:Hn}
\end {figure}

The $n^{\rm th}$ \emph{hamburger graph} $H_n$ is a special graph
with $3n+8$ vertices. Its thick edges form a cycle
$C = qv_1\dots v_n rr's'su_n\dots u_1 tt'q'q$ of length $2n+8$
together with two additional thick edges $\tau_0=qr$ and $\tau_1=st$.
See Figure \ref{fig:Hn}. In addition to these, $H_n$ has $n$ special
vertices $u_i'$ (for odd values of $i$) and $v_i'$ (for even values of $i$)
with rotation as shown in the figure. These vertices are of degree 4
and they lie on paths $r_1 = q'v_2'v_4'\dots v_m'r'$ (where $m=n$ if $n$ is
even and $m=n-1$ otherwise) and $r_2 = t'u_1'u_3'\dots u_l's'$ (where
$l=n$ if $n$ is odd and $m=n-1$ otherwise). These two paths will be referred
to as the \emph{rows} of $H_n$. Each $u_i'$ and each $v_i'$ is adjacent
to $u_i$ and $v_i$, and the 2-path $c_i = u_iu_i'v_i$
(or $c_i = u_iv_i'v_i$, depending on the parity of $i$)
is called a \emph{column} of $H_n$, $i=1,\dots,n$.

We claim that the hamburger graph $H_n$ has crossing
sequence $(n,n-1,0)$ whenever $n\ge 5$ (or $n=3$).
Although this does not
handle all possible sequences of the form $(a,b,0)$, as
discussed above, these are in some sense the most difficult and
counterintuitive cases. Indeed, a rather trivial modification of
these will be used to get all possible sequences.

Since it is quite easy to sketch proofs of $cr_0(H_n) = n$ and
$cr_2(H_n) = 0$, let us pause to do so here (rigorous arguments will
be given later).  The first of these equalities follows from the
observation that every row must meet every column in any planar
drawing in which thick edges are crossing-free.
The second equality follows from the observation that $H_n$ minus
the thick edges $\tau_0$, $\tau_1$ is a graph which can be embedded in the
sphere.  Using an extra handle for each of~$\tau_0$, $\tau_1$
gives an embedding of the whole graph in a surface of genus 2.  Of course, it
is possible to draw $H_n$ in the torus with only $n-1$ crossings by
starting with the drawing in the figure and then adding a handle to
remove one crossing.  In the third section we shall show that these
are indeed optimal drawings (for $n=3$ and $n\ge 5$).


\section{Gadgets}
\label{sec:gadgets}

The goal of this section is to establish Lemma~\ref{gadget_lem} which
permits us to use special graphs in our constructions.
Similar gadgets as used in our proof have been used previously,
cf., e.g., Pelsmajer et al. \cite{Pels} or Archdeacon et al. \cite{ABS}.
We include the constructions and proofs for reader's convenience. 

\subsection*{Thick edges}


For every $e \in E(G)$ choose positive integer $w(e)$ and replace
$e$ by a copy of~$L_{w(e)}$ whenever $w(e)>1$. Let $G'$ be the
resulting graph. We claim, that the crossing number of~$G'$ is the
same as the ``weighted crossing number'' of~$G$: each crossing of
edges~$e_1$, $e_2$ is counted $w(e_1) w(e_2)$-times. Obviously,
$cr(G')$ is at most that, as we can draw each $L_e$ sufficiently
close to where $e$~was drawn. Moreover, there is an optimal drawing
of this form (which proves the converse inequality): Given an
optimal embedding of~$G'$, consider the subgraph $L_e$ and from the
$w(e)$ paths of length~2 between its ``end-points'' pick the one,
that is crossed the least number of times. We can draw the whole
subgraph $L_e$ close to this path without increasing the number of
crossings.

This shows that we can ``simulate weighted crossing number'' by
crossing number of a modified graph. In particular, we can let
$w(e)=1$ for each ordinary edge and $w(e)>cr(G)$ for each thick edge $e$ of~$G$.
This proves Lemma~\ref{gadget_lem} for graphs with thick edges.

\begin {figure}
\centerline{\includegraphics[width=10.5cm]{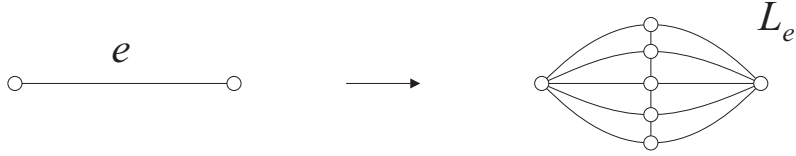}}
\caption{Putting weights on the edges (here $w(e)=5$).}
\label{fig:2a}
\end {figure}

\subsection*{Rigid vertices}

Suppose that we are considering drawings in surfaces of Euler genus
$\le g$; put $n=3g+2$. Let $G$ be a special graph with rigid vertices. We
replace each rigid vertex $v$ by a copy of $V_{n,\deg(v)}$. That is,
we add $n$ nested thick cycles of length $d=\deg(v)$ around~$v$ as
shown in Figure \ref{fig:2b} for $d=6$ and $n=5$. When doing this,
the cycles meet the edges incident with $v$ in the same order as
requested by the local rotation $\pi_v$ around $v$. If an edge
incident with $v$ is thick, then all edges in $G'$ arising from it
are thick too (as indicated in the figure for one of the edges).
Call the resulting graph $G'$.

\begin {figure}
\centerline{\includegraphics[width=10.5cm]{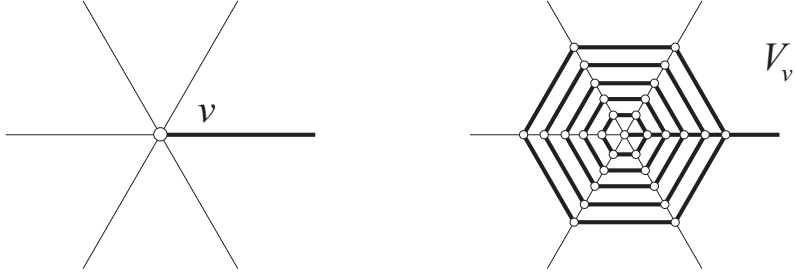}}
\caption{Controlling the prescribed local rotations.}
\label{fig:2b}
\end {figure}

We claim that the crossing number of~$G'$ (graph with thick edges but no rigid
vertices) is the same as that of~$G$.
Any drawing of~$G$ that respects the rotations at each rigid vertex
can be extended to a drawing of~$G'$ without any new crossing; in this
drawing all $n$ thick cycles in each $V_v$ are contractible and $v$ is
contained in the disc that any of them is bounding.
We will show, that there is an optimal drawing of~$G'$ of this
``canonical'' type.

Let us consider an optimal drawing (respecting thick edges) of $G'$ in $S$
(of genus $\le g$). Let $v$ be a rigid vertex of~$G$, and consider
the inner $n-1$ out of the $n$ thick cycles in~$V_v$. No edge of these
cycles is crossed; so by \cite[Proposition 4.2.6]{MT}, either one of these
cycles is contractible in $S$, or two of them are homotopic.

Suppose first, that one of the cycles, $Q$, is contractible.
Since $Q$ separates the graph into two connected components, either
the disk $D$ bounded by $Q$ or its exterior contains no vertex or edge of
$G'$ apart from some cycles and edges of $V_v$. Let us assume that this
is the interior of $D$. Now delete the drawing
of all thick cycles in $V_v$ except $Q$, and delete the drawing of all
$\deg(v)$ paths from $Q$ to $v$. Now think of $Q$ as the outermost cycle of
$V_v$ and draw the rest on $V_v$ inside $D$ without crossings.

Suppose next, that two of the cycles, $Q_1$ and~$Q_2$ are homotopic
(and that $Q_1$ is closer to $v$ in~$G'$). We cut $S$ along $Q_1$,
and patch the two holes with a disc. This simplifies the surface, so
if we can draw $G'$ on it without new crossings, we get a contradiction.
Such drawing of~$G'$ indeed exists, as we may delete the drawing of all of
$V_v$ that is ``inside'' $Q_1$ and draw it in one of the new discs.

By performing such a change to each rigid vertex, we obtain an optimum
drawing of $G'$ which is canonical. Consequently, it gives rise to a
legitimate drawing of the special graph $G$, and which is also optimal for
$G$. This shows that Lemma \ref{gadget_lem} holds also when there
are special vertices.


\section{Hamburgers}

The goal of this section is to prove Theorem~\ref{hamburger},
showing the existence of a graph with crossing sequence
$(a,b,0)$ for every $a>b>0$.  The hamburger graphs $H_n$
(defined in the introduction) have all of the key features of interest.
These are actually special graphs, but thanks to Lemma~\ref{gadget_lem} it
is enough to consider crossing sequences of special graphs.
Indeed, in the remainder of the paper we will omit the term `special'.

We have redrawn $H_n$ (for $n=5$) again in Figure~\ref{fig:Hn2} where we have given
names to numerous subgraphs of it. We have previously defined the rows
$r_1,r_2$ and columns $c_1,\dots,c_n$. For convenience we add rows
$r_0$ and $r_3$ and columns $c_0$ and $c_{n+1}$ (see Figure~\ref{fig:Hn2}).
The cycle $C$ (consisting of $c_0$, $r_0$, $c_{n+1}$, and $r_3$)   
has two trivial bridges (the thick edges $\tau_0$ and $\tau_1$)
and two other bridges. The first, denoted by $B_1$, consists of the row
$r_1$ together with all columns $c_i$ with $i$ even (and, of course, 
$1 \le i \le n$). The second one is denoted by $B_2$ and consists of the row $r_2$ and columns $c_i$ 
with $i$ odd (and, again, $1 \le i \le n$).

\begin {figure}
\centerline{\includegraphics[width=8.5cm]{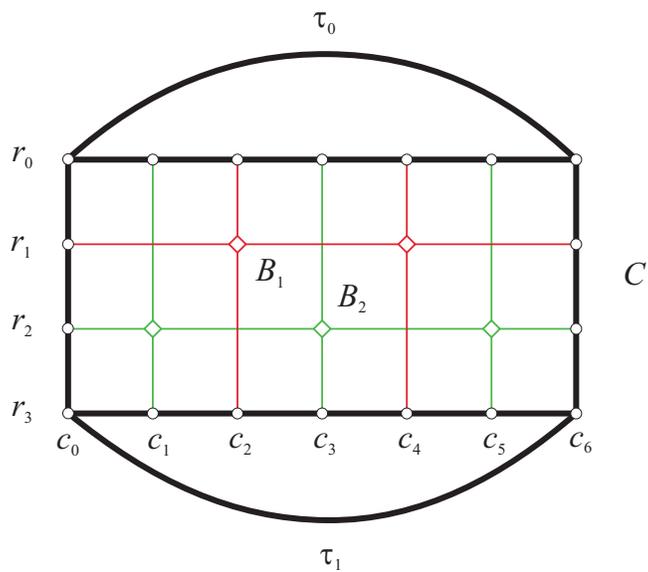}}
\caption{Main constituents of the graph $H_n$ (for $n=5$).}
\label{fig:Hn2}
\end {figure}

To get every possible crossing sequence $(a,b,0)$, we will also
require a slightly more general class of graphs.  For every $n,k \in \En$
with $n \ge 3$, we define the graph $H_{n,k}$, which is obtained from $H_n$
by adding $k$ duplicates of the second column $c_2$ as shown in
Figure~\ref{fig:Hnk} for the case of $n=4$ and $k=3$.
Note that $H_n \cong H_{n,0}$.

We shall denote by $\ss_g$ ($g\ge0$) the orientable surface of genus $g$.

\begin {figure}
\centerline{\includegraphics[width=7.5cm]{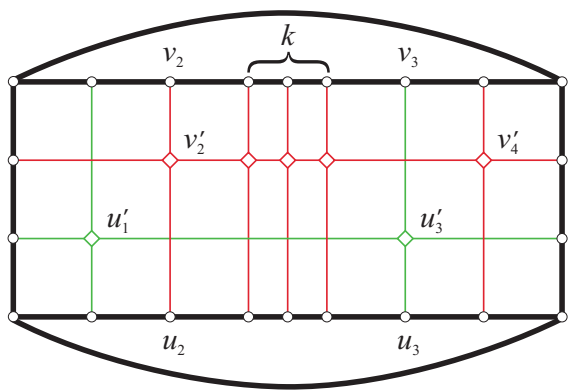}}
\caption{The graph~$H_{n,k}$ (for $n=4$ and $k=3$).}
\label{fig:Hnk}
\end {figure}

\begin{lemma} \label{ham_2tor}
 $cr_2(H_{n,k}) = 0$ for every $n,k \in \En$
with $n \ge 3$.
\end{lemma}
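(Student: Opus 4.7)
The strategy is to exhibit a crossing-free embedding of $H_{n,k}$ on $\ss_2$, following the blueprint suggested in the introduction: first embed $H_{n,k}\setminus\{\tau_0,\tau_1\}$ in the sphere, and then attach one handle for each of the two remaining thick ``shortcut'' edges $\tau_0$ and $\tau_1$.

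For the planar step, the key observation is that once $\tau_0$ and $\tau_1$ are removed, the cycle $C$ is still a simple closed curve on the sphere and therefore separates it into two topological disks. I would place the ``even half'' of the graph --- namely the row $r_1$, every column $c_i$ with $i$ even, and all $k$ duplicates of $c_2$ --- inside one of these disks, and the ``odd half'' --- $r_2$ together with every column $c_i$ with $i$ odd --- inside the other disk. Each half, viewed inside its disk, is a caterpillar-like tree whose spine is the corresponding row and whose legs are the pendant $2$-paths making up the columns; a direct inspection shows that the free endpoints of these legs (namely $\{q',r',v_i,u_i : i\text{ even}\}$ and $\{t',s',v_i,u_i : i\text{ odd}\}$, respectively) appear on the boundary of the corresponding disk in exactly the cyclic order inherited from $C$, which is precisely the order needed to draw each half planarly inside its disk. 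Placing each rigid vertex $v_i'$ (resp.\ $u_i'$) inside its disk with its four edges routed to the four prescribed neighbors automatically realizes the required alternating row--column--row--column rotation, because the three boundary-bound edges meet the disk boundary in the correct relative cyclic order and the fourth (row) edge is free to be routed to the next interior vertex. The $k$ duplicates of $c_2$ are drawn as $k$ parallel $2$-paths in a narrow strip alongside $c_2$, introducing no new crossings.

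To finish, we accommodate the two shortcut edges via two handles. Since $q$ and $r$ both lie on $C$, each is on the boundary of both faces of the planar embedding; attaching a handle whose two feet are tiny disks placed in the two different faces near $q$ and near $r$ respectively and then routing $\tau_0$ through the resulting cylinder increases the genus by $1$ and disturbs no other edge. An identical construction with a second handle routes $\tau_1=st$, bringing the total genus to $2$ and giving the desired crossing-free drawing of $H_{n,k}$ on $\ss_2$. The main non-trivial point is verifying the alternating rigid rotations in the split planar embedding; this is routine once one observes that each intermediate vertex has three of its four neighbors lying on the disk boundary in the correct relative cyclic order, as spelled out above.
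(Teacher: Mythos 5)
Your proof is correct and follows exactly the route the paper takes: embed $H_{n,k}-\tau_0-\tau_1$ in the sphere (with the two nontrivial bridges of $C$ placed in the two discs bounded by $C$) and then spend one handle on each of $\tau_0$ and $\tau_1$. You simply supply more detail than the paper's one-sentence argument, in particular the verification of the planarity of each bridge and of the prescribed rotations at the rigid vertices.
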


\begin{proof}
To draw $H_n$ in the double torus $\ss_2$, start by embedding
$H_n - \tau_0 - \tau_1$ in the sphere $\ss_0$.  Now, use one handle to route
the edge $\tau_0$, and another handle for $\tau_1$.
\end{proof}

\begin{lemma} \label{ham_plane}
$cr_0(H_{n,k}) = n+k$ for every $n,k \in \En$ with $n \ge 3$.
\end{lemma}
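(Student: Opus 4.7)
The upper bound $cr_0(H_{n,k}) \le n+k$ is immediate from the drawing of Figure~\ref{fig:Hnk}: each of the $n$ original columns and each of the $k$ duplicates of $c_2$ contributes exactly one crossing with the row that does not contain its interior rigid vertex. For the lower bound I would fix an optimal planar drawing and exploit the topology imposed by the uncrossed thick cycle $C$ and the thick chords $\tau_0,\tau_1$. The cycle $C$ bounds two open discs $A$ and $B$ on $\ss_0$. Since its endpoints lie on $C$ and $C$ cannot be crossed, every row and every column is contained entirely in one of $A,B$; moreover, each column must share a component with the row containing its unique interior rigid vertex.

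The first and most delicate step is to show that both rows must lie in the same component. Suppose instead $r_1 \in A$ and $r_2 \in B$; then the even columns (including the $k$ duplicates of $c_2$) all lie in $A$, and the odd columns in $B$. In $A$, the row $r_1$ together with $C$ splits $A$ into a ``$v$-disc'' bounded by $r_1$ and the arc $q'q v_1 \dots v_n r r'$, and a ``$u$-disc'' bounded by $r_1$ and the complementary arc of $C$. If $\tau_0 = qr$ were drawn in $A$, it would lie in the $v$-disc and be crossed by every edge $v_i' v_i$ for $i$ even, contradicting the thickness of $\tau_0$. A symmetric argument in $B$ using the odd-column edges $u_i' v_i$ rules out $\tau_0 \in B$. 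This contradiction forces $r_1$ and $r_2$ into a common component, say $A$, and an analogous chord analysis then forces $\tau_0, \tau_1 \in B$.

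With both rows and all $n+k$ columns in $A$, I would partition $A$ by $r_2$ into sub-discs $A_1$ (containing the arc $q, v_1, \dots, v_n, r$ on its boundary) and $A_2$ (containing $t, u_1, \dots, u_n, s$), and then partition $A_1$ by $r_1$ into $A_{1,1}$ (with the $v$-arc) and $A_{1,2}$ (touching $r_2$). The prescribed local rotation at each rigid vertex $v_i'$ or $u_i'$ is \emph{alternating}, meaning the two row-edges and the two column-edges appear in alternating cyclic order, so the two column-edges emerge on opposite sides of the corresponding row. For each even column $c_i$, this places the edge $u_i v_i'$ on the $A_{1,2}$ side of $r_1$ near $v_i'$, while its other endpoint $u_i$ lies on $\partial A_2$, forcing at least one crossing with $r_2$. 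A symmetric argument shows that each odd column contributes at least one crossing with $r_1$. Summing over all $n+k$ columns yields the required lower bound.

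The main obstacle is the case analysis that rules out the mixed configuration $r_1 \in A$, $r_2 \in B$. In that configuration each column could, in principle, pass through its rigid vertex without any forced row-crossing, so an honest count of crossings could be zero; it is only the pair of thick chords $\tau_0, \tau_1$ that provides the global topological obstruction closing this loophole. Verifying this carefully using only the alternating rotation at each rigid vertex and the cyclic order of endpoints on $C$ is the technical crux.
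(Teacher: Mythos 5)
Your proposal is correct and follows essentially the same route as the paper's proof: the uncrossed thick cycle $C$ bounds two discs, the thick chords $\tau_0,\tau_1$ cannot coexist in a disc with any column and hence force both bridges $B_1,B_2$ into one disc and $\tau_0,\tau_1$ into the other, after which each column must cross the opposite row, giving $n+k$ crossings. The only differences are cosmetic: the paper locates $\tau_0,\tau_1$ first and deduces where the bridges go, and it does not need the rigid-vertex rotations (or your nested discs $A_{1,1},A_{1,2}$, which tacitly assume $r_1$ lies on the $v$-side of $r_2$) for the final count, since each even column joins $v_i$ to $u_i$, which are separated by $r_2$ together with an arc of $C$, and symmetrically for the odd columns and $r_1$.
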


\begin{proof}
Consider a drawing of $H_{n,k}$ in the sphere. If this drawing has
finite crossing number, the cycle $C$ must be embedded as a simple
closed curve which separates the surface into two discs $D_1,D_2$ and is not
crossed by any edge. Moreover, both thick edges $\tau_0$ and $\tau_1$
are drawn in the same disc, say $D_2$. Now every column of $B_1$ crosses
the row $r_2$ and
every column of $B_2$ crosses the row $r_1$, so we have at least
$n+k$ crossings.  Since $H_{n,k}$ is drawn in $\ss_0$ with $n+k$ crossings
in Figure \ref{fig:Hnk}, we conclude that $cr_0(H_{n,k}) = n+k$ as required.
\end{proof}

Not surprisingly, the situation when drawing our graphs $H_n$ on the
torus is considerably more complicated to analyze.  By drawing $H_n$
in the plane with $n$ crossings and then using a handle to remove
one crossing, we see that $cr_1(H_n) \le n-1$ for all $n \ge 3$
(even $cr_1(H_{n,k}) \le n-1$ for all $n \ge 3$ and $k \ge 0$).
For $n \ge 5$, we shall prove that this is the best which can be
achieved.  For $n \le 4$, however, there is some exceptional
behavior (cf.\ Lemma~\ref{ham_lem}).

\begin{lemma}
For every optimal drawing of $H_n$ (in some surface), each column
$c_i$ $(1\le i\le n)$ is a simple curve.
\end{lemma}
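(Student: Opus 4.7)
The plan is to argue by contradiction: suppose that in some optimal drawing of $H_n$ some column $c_i$ fails to be a simple curve. Letting $e_1, e_2$ be the two edges of $c_i$ and $w_i \in \{u_i', v_i'\}$ the rigid middle vertex, each of $e_1, e_2$ is drawn as a simple arc and the two share only the endpoint $w_i$, so the only way $c_i$ can fail to be simple is that $e_1$ and $e_2$ cross at some interior point $p$. I will produce a drawing of $H_n$ with strictly fewer crossings, contradicting optimality.

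To carry this out I would pick $p$ to be the crossing of $e_1$ and $e_2$ that is closest to $w_i$ along $e_1$, and write $e_1 = \gamma_1 \gamma_2$ with $\gamma_2$ the sub-arc from $p$ to $w_i$, and $e_2 = \delta_1 \delta_2$ with $\delta_1$ from $w_i$ to $p$; by the choice of $p$ the arc $\gamma_2$ meets $e_2$ only at $p$. Then I would perform the standard adjacent-edge swap: replace $e_1$ by $\gamma_1 \cdot \delta_1^{-1}$ and $e_2$ by $\gamma_2^{-1} \cdot \delta_2$, and smooth the two new curves at $p$ so that they no longer meet there. Routine bookkeeping shows that the crossing at $p$ disappears, that any crossings which lay in $\gamma_1 \cap \delta_1$ now appear as self-intersections of the new $e_1$ and can be removed by shortcutting (which cannot introduce any new crossings), and that all other crossings involving $e_1$ or $e_2$ are merely relabeled. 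Hence the new drawing has at least one fewer crossing.

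The main point still to be addressed is that the modified drawing must remain a legitimate drawing of the special graph $H_n$. Thick edges are untouched, since the column edges are not thick, so the only concern is the rotation at the rigid vertex $w_i$: the swap interchanges the tangent directions of $e_1$ and $e_2$ at $w_i$ while leaving the two incident row edges fixed. I expect this to be the one slightly delicate point of the proof, and the reason it works is that the prescribed rotation at $w_i$ in Figure~\ref{fig:Hn} places the two column edges in opposite positions, with the two row edges sitting between them, so swapping the two column edges produces precisely the reverse cyclic rotation. Since a drawing of a special graph is allowed to realize the prescribed rotation either clockwise or counter-clockwise, the modified drawing is still admissible, yielding the desired contradiction.
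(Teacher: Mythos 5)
Your proposal is correct and follows essentially the same argument as the paper: swap the two sub-arcs of the column's edges at a crossing point, smooth the resulting touching to save a crossing, and observe that the induced rotation at the rigid middle vertex is merely reversed (columns opposite, rows between them), which is still an admissible realization of the prescribed rotation. Your extra care in choosing the crossing nearest the rigid vertex and in accounting for self-intersections is a minor refinement the paper glosses over, not a different approach.
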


\begin{proof}
It is easy to see that in every optimal drawing, every edge is represented
by a simple curve. Let us now consider a column $c_i=v_iv_i'u_i$ (or
similarly for $v_iu_i'u_i$) and suppose that the edges $e=v_iv_i'$ and
$f=u_iv_i'$ cross. Suppose that $e$ is represented by the simple curve
$\alpha(t)$, $0\le t\le 1$, where $\alpha(0)=v_i$ and $\alpha(1)=v_i'$.
Similarly, let $f$ be represented by the simple curve $\beta(t)$,
$0\le t\le 1$, where $\beta(0)=u_i$ and $\beta(1)=v_i'$.
Let $\alpha(t')=\beta(t')$ ($0<t'<1$) be where they cross.
Now let $\tilde\alpha(t)=\alpha(t)$ for $t\le t'$ and
$\tilde\alpha(t)=\beta(t)$ for $t\ge t'$. Change similarly $\beta$
to $\tilde\beta$. Then the crossing becomes a touching of the two curves,
which can be eliminated yielding a drawing with fewer crossings.
Observe that the local rotation at the special vertex $v_i'$ changes
from clockwise to anticlockwise but this is still consistent with the
requirement for this special vertex. Therefore the new drawing contradicts
the optimality of the original one.
\end{proof}

At several occasions in the proof we will use the following
well-known fact about closed curves on the torus.

\begin{lemma}[{\cite[Proposition 4.2.6]{MT}}] \label{l:fact}
Let $\varphi$, $\psi$ be two simple closed noncontractible
curves on the torus that are not freely homotopic. Then $\varphi$ and $\psi$ 
cross each other.
\end{lemma}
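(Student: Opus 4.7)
The plan is to translate the problem into homology on the torus. Identifying $T^2 = \mathbb{R}^2/\mathbb{Z}^2$, one has $H_1(T^2) \cong \mathbb{Z}^2$; since $\pi_1(T^2)$ is abelian, the free homotopy class of an oriented closed curve is determined by its homology class, and an unoriented free homotopy class is determined by the homology class modulo sign. The first step is to establish that a simple closed noncontractible curve $\gamma$ has a primitive homology class, i.e., $[\gamma] = (p,q)$ with $\gcd(p,q) = 1$. This is classical: lifting $\gamma$ to an arc in the universal cover from $0$ to $(p,q)$, simplicity of $\gamma$ on the torus forces no lattice translate of the lift to meet it except the translates by integer multiples of $(p,q)$. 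A translate by $(p/d, q/d)$ with $d > 1$ would have endpoints strictly interleaved with those of the lift along the line through $0$ and $(p,q)$, and would therefore be forced to cross it, contradicting simplicity.

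With primitivity in hand, I would invoke the algebraic intersection form on $H_1(T^2)$, which in the standard basis is the $2 \times 2$ determinant: curves with classes $(p_1, q_1)$ and $(p_2, q_2)$ have algebraic intersection number $p_1 q_2 - p_2 q_1$. This is a free-homotopy invariant and equals the signed count of transverse intersection points for any transverse representatives. Under the hypothesis that $\varphi$ and $\psi$ are not freely homotopic, their (primitive) classes satisfy $[\varphi] \ne \pm[\psi]$. Two primitive vectors in $\mathbb{Z}^2$ are linearly dependent over $\mathbb{Q}$ if and only if they agree up to sign (a short divisibility argument using $\gcd = 1$), so the determinant is nonzero.

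Finally, a nonzero algebraic intersection number is a lower bound for the geometric intersection number, so after a small transverse perturbation of $\varphi$ and $\psi$ within their free homotopy classes, they must meet in at least one point and cross there; since homotoping to transverse position does not alter whether two curves meet when the obstruction to disjointness lies in the algebraic intersection, this forces the original curves to cross as well. The step I expect to require the most care is the primitivity claim for simple closed curves on the torus; once that and the standard identification of the algebraic intersection pairing with the determinant are in place, the conclusion reduces to linear algebra over $\mathbb{Z}$.
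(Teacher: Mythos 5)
The paper does not actually prove this lemma: it is quoted verbatim from Mohar and Thomassen (Proposition 4.2.6 of \cite{MT}), so there is no in-paper argument to compare yours against. Your homological proof is the standard self-contained argument for the torus and is essentially correct in outline: free homotopy classes of oriented loops on the torus coincide with elements of $H_1 \cong \Zet^2$ because $\pi_1$ is abelian (up to sign for unoriented curves), noncontractible simple closed curves carry primitive classes, two primitive classes not equal up to sign are linearly independent and hence have nonzero algebraic intersection number (the determinant), and the algebraic intersection number is a lower bound for the geometric one. What this buys is an actual proof where the paper offers only a citation; the price is that the argument is specific to the torus (abelianness of $\pi_1$ is essential), whereas the cited proposition sits inside a more general theory of disjoint pairwise nonhomotopic curves on arbitrary surfaces, which the paper also uses in Section~\ref{sec:gadgets}.

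One sub-step of your sketch does not work as written. In the primitivity argument you assert that two disjoint arcs in the plane whose four endpoints are interleaved along a line must cross; this is false for arcs in the plane, since one arc can detour around an endpoint of the other. To repair it, either use that the full preimage of $\gamma$ in $\mathbb{R}^2$ is a lattice-invariant disjoint union of properly embedded lines, each of which separates the plane (interleaving across a separating line does force an intersection), or replace the step entirely by the cut-and-Euler-characteristic argument: cutting the torus along a noncontractible simple closed curve yields an annulus, so some homeomorphism of the torus carries $\gamma$ to the standard $(1,0)$ curve, and homeomorphisms act on $H_1$ by $GL_2(\Zet)$, which preserves primitivity. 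Since you explicitly flag primitivity as the delicate point and it is a classical fact that could equally be cited, this is a repairable gap rather than a wrong approach. Finally, to get that the curves \emph{cross} rather than merely touch, add the one-line observation that tangential intersections can be removed by local perturbation, so if no intersection point were a transverse crossing the curves could be made disjoint, contradicting the nonzero algebraic intersection number.
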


The following is well-known (cf., e.g., \cite{Zieschang}).

\begin{lemma} \label{l:twoclosedcurves}
Let $\varphi$, $\psi$ be two closed curves on some surface; assume $\psi$ is contractible.
The curves may intersect themselves and each other, but we assume that 
\begin{enumerate}
  \item the total number of intersections is finite, and
  \item each point of intersection is a crossing (the curves do not touch
    and there are no more than two arcs that run through the point).
\end{enumerate}

Then, the number of intersections of $\varphi$ with $\psi$ is even.
\end{lemma}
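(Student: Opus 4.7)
The plan is to reduce the claim to a parity-preservation argument along a contracting homotopy. Since $\psi$ is null-homotopic, there is a homotopy $\psi_t$, $t\in[0,1]$, with $\psi_0=\psi$ and $\psi_1$ a constant map. After a small generic perturbation (keeping endpoints fixed), $\psi_t$ meets $\varphi$ transversally, with no triple points and no points coinciding with self-intersections, for all but finitely many values of $t$. I would then define $N(t)=|\psi_t\cap\varphi|$ and show this count is locally constant on the open generic set, so the problem reduces to analyzing what happens at each exceptional time.

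The key step is to classify the codimension-one events along the homotopy: either an arc of $\psi_t$ becomes tangent to $\varphi$ at one point (a Morse-type event that creates or destroys a pair of crossings), or an arc of $\psi_t$ sweeps through a self-intersection point of $\psi_t$ that lies momentarily on $\varphi$, or three arcs pass through a common point. A short case-by-case check, drawing the local picture in each case, shows that $N(t)$ changes by $0$ or $\pm 2$ across each such event. Since $N(1)=0$, integrating these parity-preserving jumps from $t=1$ down to $t=0$ forces $N(0)=|\varphi\cap\psi|$ to be even.

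The main obstacle is the bookkeeping in the case analysis: one has to set up the transversality/genericity carefully enough that only the listed events can occur, and then rule out a parity change in each. A cleaner alternative I would also consider is to invoke the mod-$2$ intersection pairing on $H_1(\Sigma;\mathbb{Z}/2)$: for transverse closed curves $\alpha,\beta$ the parity of $|\alpha\cap\beta|$ depends only on the homology classes $[\alpha],[\beta]$, and any contractible loop is null-homologous, so $[\psi]=0$ and $[\varphi]\cdot[\psi]=0$ in $\mathbb{Z}/2$, giving the result immediately. This second route trades the explicit event analysis for a standard piece of surface-topology machinery; since the statement is cited as well known (from Zieschang), either route is in the right spirit, and I would likely present the homotopy version to keep the proof self-contained.
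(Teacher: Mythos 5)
Your main argument---contracting $\psi$ to a point through a generic homotopy and checking that the intersection count with $\varphi$ changes only by $0$ or $\pm 2$ at each codimension-one event---is exactly the Reidemeister-move argument the paper sketches in its proof hint, just spelled out in more detail. The proposal is correct and takes essentially the same approach (your alternative via the mod-$2$ intersection pairing is also valid, but you rightly note the homotopy version is the self-contained one).
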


\paragraph{Proof (hint):} 
Let us transform $\psi$ continuously to a trivial curve. 
The number of intersections of $\varphi$ with $\psi$ stays the 
same, or changes by $2$ when we modify $\psi$ as in Figure~\ref{fig:Reidemeister}.
\bigskip


\begin {figure}
\centerline{\includegraphics[width=10.5cm]{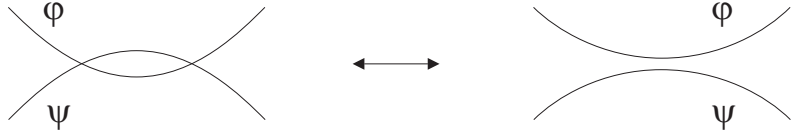}}
\caption{Illustration for the proof of Lemma~\ref{l:twoclosedcurves}.}
\label{fig:Reidemeister}
\end {figure}

It will be convenient for us to classify different types of drawings
of $H_n$ in the torus depending on the drawing of the thick subgraph
$C +\tau_0+\tau_1$. In Figure~\ref{fig:types} we have listed nine possible
embeddings of $C+\tau_0+\tau_1$ in $\ss_1$, where $\tau_0$ and $\tau_1$ are
drawn with dashed lines. We shall say that a drawing of
$H_n$ is of \emph{type} $A$, $B$, $C$, $C'$, $D$, $E$, $E'$, $E''$, or $E'''$ if the induced drawing of
$C + \tau_0 + \tau_1$ is as in the corresponding part of Figure~\ref{fig:types}.
Although there are other possible drawings of $C + \tau_0 + \tau_1$ in the torus, our next lemma shows that the only ones which extend to finite crossing number drawings of $H_n$ have one of these types.

\begin{lemma}\label{finite_hamburger}
Every drawing of $H_n$ for $n \ge 3$ on a torus $\s$ with 
crossing number less than~$n$ has type $A$, $B$, $C$, $C'$, $D$, $E$, $E'$, $E''$, or $E'''$.
\end{lemma}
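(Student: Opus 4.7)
The plan is to exploit the fact that a drawing with fewer than $n$ crossings has finitely many crossings, so by definition of the special-graph crossing number no thick edge is crossed. Since every edge of $G_0 := C + \tau_0 + \tau_1$ is thick, $G_0$ appears as an embedded subgraph of the torus, and the type of the drawing depends only on the homeomorphism class of this embedding. The lemma therefore reduces to a purely topological task: enumerate the homeomorphism types of embeddings of $G_0$ in $\ss_1$ and verify that every type outside the listed nine forces at least $n$ crossings among the remaining, non-thick edges of $H_n$.

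For the enumeration I would split according to the homotopy class of the cycle $C$ in the torus. When $C$ is contractible it bounds a disc $D$, and each chord $\tau_i$ lies either inside $\overline{D}$ (where its rel-endpoint homotopy class is unique and only the side on which it splits $D$ matters) or in the complementary once-punctured torus (where its homotopy class is determined by how it wraps around a handle generator). When $C$ is noncontractible, cutting along $C$ yields an annulus in which each chord is a simple arc, and the essentially distinct configurations of $\tau_0$ and $\tau_1$ (endpoints on the same or opposite boundary circle, together with their mutual linking in the annulus) form a short finite list. Lemma~\ref{l:fact} tames the wrapping freedom: a chord that wraps non-trivially around a generator of the torus is forced to cross any homotopically different closed curve, so all but a small bounded amount of wrapping can be discarded at once by pigeon-holing against the columns.

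To show a non-listed type forces $\ge n$ crossings, I would use the $n$ columns $c_i$ together with the two rows $r_1, r_2$. Each column is (by the preceding lemma, after harmless modifications that do not increase crossing number) a simple arc from $v_i$ to $u_i$, where the $v_i$'s lie consecutively on the $r_0$-arc of $C$ and the $u_i$'s on the $r_3$-arc. In every embedding of $G_0$ outside the listed types, some combination of arcs of $C$ with $\tau_0$ or $\tau_1$ separates the $v_i$-segment from the $u_i$-segment inside every face that could carry a given column, so each of the $n$ columns must pierce this separator through a non-thick edge, yielding the desired $\ge n$ crossings. Lemma~\ref{l:twoclosedcurves} handles the residual case in which the separator is contractible: a parity argument then forces even more crossings.

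The principal obstacle is the completeness of the case analysis. With two chords on the torus, the a~priori number of \emph{potential} embedding types is large, and in the subcases where $C$ or one of the chords is noncontractible, $\tau_0$ and $\tau_1$ can wrap around a handle in infinitely many combinatorially distinct ways. My plan for taming this is to proceed in strict order—first fix the homotopy type of $C$, then the rel-endpoint type of $\tau_0$, then the rel-endpoint type of $\tau_1$ relative to $C + \tau_0$—and to invoke Lemma~\ref{l:fact} at each stage to eliminate any configuration that already forces $\ge n$ crossings with the columns. The surviving short list is then matched against Figure~\ref{fig:types} one type at a time, which I expect to produce exactly the nine types $A, B, C, C', D, E, E', E'', E'''$.
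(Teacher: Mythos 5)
Your proposal follows essentially the same route as the paper's proof: cut the torus along the uncrossed thick cycle $C$, split on whether $C$ is contractible (disc plus once-punctured torus) or not (annulus), classify the placements of the thick chords $\tau_0,\tau_1$ in the resulting pieces, and eliminate every non-listed configuration by showing that the bridges $B_1,B_2$ are forced either to cross a thick edge (infinite crossing number) or to produce at least $n$ crossings as in the planar case. The paper's concrete realization of your ``pigeon-holing against the columns'' step in the contractible case is to form the closed curves $\tau_0\cup r_0$ and $\tau_1\cup r_3$ in the punctured torus and apply the homotopy argument to them, which is exactly the device your sketch gestures at.
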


\begin{proof} Let $\s'$ be the bordered surface obtained from $\s$ by cutting along the cycle $C$.
First suppose that $C$ is contractible.  Then $\s'$ is disconnected, with one component
a disc $D$, and the other component $\s''$ homeomorphic to $\ss_1$
minus a disc.  If both $B_1$ and $B_2$ are drawn in $D$, then we
have at least $n$ crossings (as in Lemma \ref{ham_plane}).  If only one
of $B_1$ or $B_2$, say $B_1$ is drawn in $D$, then $B_2$ and the
edges $\tau_0$ and $\tau_1$ are drawn in $\s''$ (else the crossing
number is infinite).  Consider the curves $\tau_0 \cup r_0$ and
$\tau_1 \cup r_3$ in $\s''$.  If either of these is contractible, then $B_2$
must cross it (yielding infinite crossing number).  Otherwise 
(using the Lemma~\ref{l:fact})
they must be freely homotopic noncontractible curves in $\s''$,
so $\tau_0 \cup c_0 \cup \tau_1 \cup c_{n+1}$ is a contractible curve.
Therefore $B_2$ must cross it, yielding again infinitely many
crossings.  Thus, we may assume that both $\tau_0$ and $\tau_1$ are
drawn in the disc $D$ and $B_1$ and $B_2$ are drawn in $\s''$ so our
drawing is of type~$A$.

Next suppose that $C$ is not contractible.  In this case, the
surface $\s'$ is a cylinder bounded by two copies of the cycle $C$.
If both $\tau_0$ and $\tau_1$ have all of their ends on the same
copy of $C$, we must have a drawing of type $B$, $C$, or $C'$. If one has
both ends on one copy of $C$, and the other has both ends on the
other copy of $C$, then there are infinitely many crossings, unless
the drawing is of type $D$. Finally, if one of $\tau_0$, $\tau_1$,
has its ends on distinct copies of $C$, then the
crossing number will be infinite unless the other one of $\tau_0$, $\tau_1$,
has both ends on
the same copy of $C$ giving us a drawing of type $E$, $E'$, $E''$, or $E'''$.
\end{proof}

If $G$ is a graph drawn on a surface and $A,B \subseteq G$, then we shall denote by
$Cr(A \mid B)$ the total number of crossings of an edge from~$A$ with an edge
from~$B$, where crossings of an edge $e\in E(A\cap B)$ with another edge
$f\in E(A\cap B)$ are counted only once. In particular,
the total number of crossings of graph~$G$ is equal to $Cr(G \mid G)$.

\begin {figure}
\centerline{\includegraphics[width=12cm]{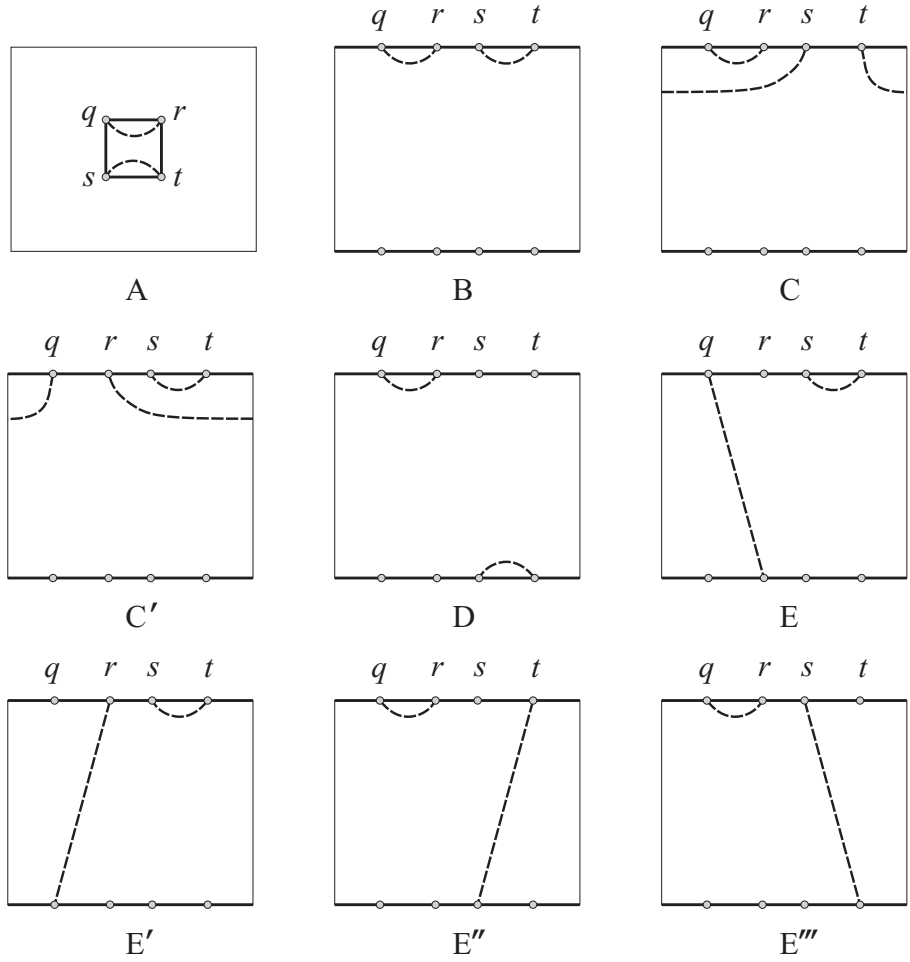}}
\caption{Nine special types of embedding of the thick subgraph
         $C + \tau_0 + \tau_1$ in the torus. In types $B$--$E'''$, the cycle
         $C$ is drawn on the top and bottom sides of the square.}
\label{fig:types}
\end {figure}

\begin{lemma} \label{ham_lem}
$cr_1(H_n) = n-1$ if\/ $n=3$ or\/ $n \ge 5$, while\/ $cr_1(H_4) =
2$. Furthermore, Figure \ref{fig:excH3}(a)--(c') shows the only
drawings of $H_3$ in the torus with two crossings and the added
property that $Cr(r_2 | G) = 0$. Figure~\ref{fig:excH4} displays the
unique drawing of $H_4$ in the torus with two crossings. 
\end{lemma}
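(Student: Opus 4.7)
The upper bound $cr_1(H_n) \le n-1$ for all $n \ge 3$ is immediate from the standard plane drawing of $H_n$ with $n$ crossings (Figure~\ref{fig:Hn}), after rerouting one edge through a handle. For $H_4$, the stronger upper bound $cr_1(H_4) \le 2$ is witnessed by the drawing in Figure~\ref{fig:excH4}. The main task is establishing the matching lower bounds and the characterizations of extremal drawings.

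My strategy is a case analysis based on Lemma~\ref{finite_hamburger}: any drawing of $H_n$ on the torus with fewer than $n$ crossings has one of the nine types $A, B, C, C', D, E, E', E'', E'''$ displayed in Figure~\ref{fig:types}. For each type, cutting the torus along $C$ yields either a disc together with a once-punctured torus (type $A$) or a cylinder (types $B$--$E'''$). Inside these pieces the rows $r_1, r_2$ and columns $c_1,\dots,c_n$ are simple arcs whose endpoints on $C$ are prescribed by the combinatorial structure of $H_n$, and concatenating them with arcs of $C$ produces closed curves on the torus whose homotopy types are controlled by Lemma~\ref{l:fact} and whose intersection parities are controlled by Lemma~\ref{l:twoclosedcurves}.

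The core counting argument generalizes Lemma~\ref{ham_plane}: the alternating pattern in which the columns of $B_1$ and $B_2$ attach to $C$ forces each column of $B_1$ to cross $r_2$ and each column of $B_2$ to cross $r_1$ when everything is contractible, for a total of $n$ crossings. The single handle can save at most one such crossing, namely by rerouting one row along a non-contractible curve. However, by Lemma~\ref{l:fact} any two such non-contractibly rerouted rows would have to be freely homotopic, and combining this with the parity constraints from Lemma~\ref{l:twoclosedcurves} prevents both $r_1$ and $r_2$ from saving a crossing simultaneously once $n \ge 5$. For $n = 3$ the same argument yields the lower bound $cr_1(H_3) \ge 2$, while for $n = 4$ a parity coincidence of the attachment positions makes it possible for both rows to save a crossing in one specific configuration, which accounts for the drop to $cr_1(H_4) = 2$.

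The characterizations then follow by tracing through the case analysis and isolating the configurations in which equality holds. For $H_3$, the hypothesis $Cr(r_2 \mid G) = 0$ immediately rules out all types in which $r_2$ would have to cross a column, leaving only a handful of possibilities that match exactly the drawings in Figure~\ref{fig:excH3}. For $H_4$, pinning down the one configuration that permits both rows to save a crossing forces the drawing uniquely to that of Figure~\ref{fig:excH4}. The principal obstacle I expect is the topological bookkeeping for types $B$--$E'''$: each arrangement of $\tau_0, \tau_1$ relative to $C$ gives different possible homotopy classes for the rows and columns inside the cylinder, and in each case I must verify via Lemmas~\ref{l:fact} and~\ref{l:twoclosedcurves} that no configuration achieves fewer than $n-1$ crossings (respectively $2$ for $n=4$) beyond those already witnessed by the figures.
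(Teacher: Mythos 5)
There is a genuine gap, and it lies exactly where you locate the ``principal obstacle'': the topological bookkeeping for types $B$--$E'''$ is not a routine verification to be deferred, it is the entire content of the lemma, and the guiding principle you propose for it is false. You argue that in a sub-$n$ drawing the crossings are row--column crossings forced by the alternating attachment pattern, and that the handle can save at most one of them, ``namely by rerouting one row along a non-contractible curve.'' This is contradicted by the extremal drawings themselves. In the unique two-crossing drawing of $H_4$ (Figure~\ref{fig:excH4}, type $B$), \emph{neither} row is crossed at all: the columns wind around the cylinder in the alternating pattern $\ell r\ell r$, all four potential row--column crossings disappear, and the only two crossings are between the columns $c_2$ and $c_3$. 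The same phenomenon produces the exceptional $H_3$ drawings of types $B$ and $C$ (patterns $\ell r\ell$ and $\ell 0 r$), where the crossings are again column--column or row--column in a way your model does not predict. So the real difficulty is that on the torus the columns acquire nontrivial homotopy types (winding numbers, or $\ell$ versus $r$), and savings in row--column crossings are paid for by column--column crossings; bounding that trade-off is what the proof must do, and your sketch contains no mechanism for it. Your explanation of why $n=4$ is exceptional (``both rows save a crossing'') likewise misdescribes what actually happens.

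The paper's proof supplies the two missing ingredients. First, it is an induction on $n$: deleting $c_1$, $c_n$, or two consecutive columns yields a subdivision of $H_{n-1}$ or $H_{n-2}$ of the \emph{same type}, so any column carrying enough crossings can be removed and the inductive bound applied; this is how all cases with $n\ge 5$ are closed. Second, within each type it encodes the homotopy class of each extended column curve $c_i^+$ as a letter of a word $\mathbf a$ (over $\{\ell,r\}$ or $\mathbb Z$, depending on the type) and derives quantitative crossing lower bounds from pairs of letters (e.g.\ $Cr(c_i\mid c_j)\ge|a_i-a_j-1|$, or $\ge 2$ when $a_i=r$, $a_j=\ell$ with $i<j$), together with the parity ``Claim'' that homotopic consecutive columns each force a crossing on every row. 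The case analysis over admissible words is what both proves the lower bound and isolates the exceptional drawings. Without an inductive framework and without column--column crossing estimates tied to homotopy types, your argument cannot establish $cr_1(H_n)\ge n-1$ for $n\ge5$, and it would miss the exceptional configurations entirely.
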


\begin {figure}
\centerline{\includegraphics[width=12cm]{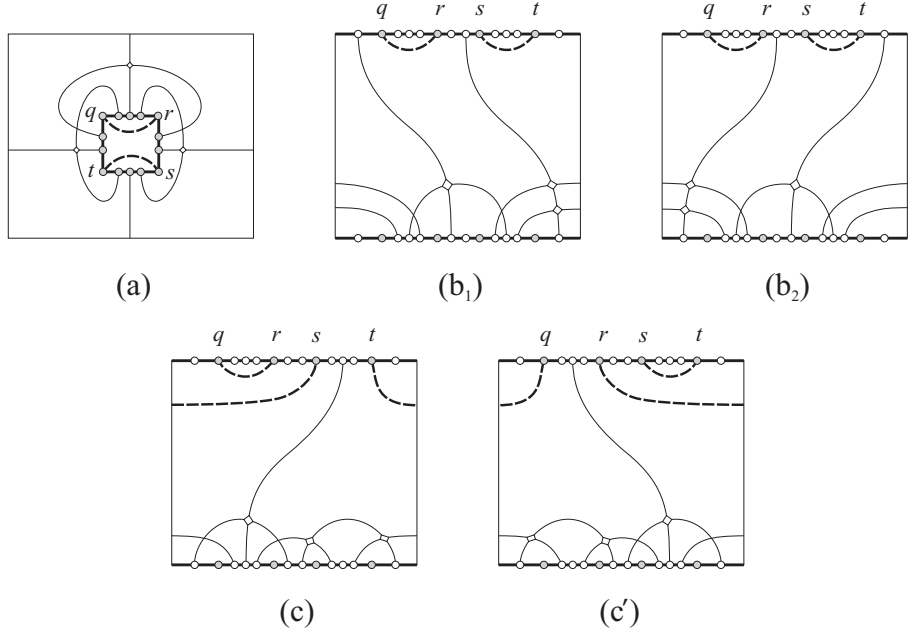}}
\caption{Exceptional drawings of $H_3$.}
\label{fig:excH3}
\end {figure}

\begin {figure}
\centerline{\includegraphics[width=3.6cm]{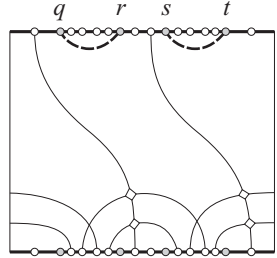}}
\caption{Exceptional type~$B$ drawing of $H_4$.}
\label{fig:excH4}
\end {figure}

\begin{proof}
We proceed by induction on~$n$.  Consider a drawing $\calD$ of~$H_n$ in a
surface $\s$ homeomorphic to the torus, such that $\calD$ yields minimum crossing
number.
We shall frequently use the inductive assumption for $n-1$ and~$n-2$, since by
deleting the edges of the column $c_1$, the column $c_n$, or two consecutive
columns $c_i$ and~$c_{i+1}$ we obtain a new graph which is a subdivision of
$H_{n-1}$ or $H_{n-2}$ (assuming $n \ge 3$). This technique
will be used throughout the proof. It is also worth noting that
after applying this operation to $\calD$, the drawing of the smaller
hamburger graph is of the same type as the drawing~$\calD$.

The cycle $C$ is not crossed in~$\calD$, so we may cut our
surface along this curve. This leaves us with a drawing of $H_n$ in
a closed bordered surface---which we shall denote $\s'$---where each
edge of $C$ appears twice on the boundary. We shall use $C^1$ and~$C^2$
to denote these copies.

Essential to our proof is an analysis of the homotopy behavior of
the rows and columns. To make this precise, let us now choose a
point $N$ in the interior of the row $r_0$, $S$ in the interior of
$r_3$, $W$ in the interior of $c_0$ and $E$ in the interior of
$c_{n+1}$. (Actually, for each of these points we have two copies:
$N^1$ and $N^2$, etc. But we will avoid distinguishing these if
there is no danger of confusion). For each column $c_i$ ($0 \le i
\le n+1$) let $c_i^+$ be a simple curve in~$\s'$ obtained by
extending $c_i$ along the appropriate copies of the rows $r_0$ and
$r_3$ so that it has ends $N$ and $S$.  Similarly, for each row
$r_i$ ($0 \le i \le 3$) let $r_i^+$ be a curve in $\s'$
obtained by extending $r_i$ along the appropriate copies of the
columns $c_0$ and $c_{n+1}$ so that it has ends $E$ and $W$. We
shall focus our attention on the homotopy types in $\s'$ of the
curves $c_i^+$ where $N$ and $S$ are the fixed end points (and
similarly $r_i^+$ where $E$ and $W$ are fixed): we say that $c_i^+$
and $c_j^+$ are \emph{homotopic} if $c_i^+$ may be continuously
deformed to $c_j^+$ in the surface $\s'$, while keeping their
endpoints fixed. Note that $c_i^+$ and $c_j^+$ can only be homotopic
if $c_i$ and $c_j$ are connecting the same copies of $N$ and
$S$---that is they attach on the same side of $C$ in the original
surface $\s$. Also note, that for $i=0$ or $i=n+1$ we actually have
two copies of~$c_i$, so we should be speaking of, e.g., $c_0^+{}^1$
and $c_0^+{}^2$. We will refrain from this distinction whenever
possible to keep the notation clearer---so when saying $c_0^+$ and
$c_1^+$ are homotopic we will actually mean that $c_1^+$ is
homotopic to $c_0^+{}^s$ for some~$s \in \{1, 2\}$.

\bigskip
We will use frequently the following fact that connects the
homotopy types of columns and their crossing behaviour with respect
to the rows (and vice versa). We will refer to this statement as to ``the Claim''.

\mytit{Claim:}{If $c_i^+$ and $c_{i+1}^+$ are homotopic
  ($1 \le i < n$),
  then $Cr( r_j \mid c_i\cup c_{i+1}) \ge 1$ for $j = 1, 2$.  Similarly, if $r_1^+$ and $r_2^+$
  are homotopic, then $Cr( r_1\cup r_2 \mid c_i ) \ge 1$ for every $1 \le i \le n$.}

\bigskip

To see this, let us observe that the closed curve obtained by following $c_i^+$ from $S$ to
$N$ and then $c_{i+1}^+$ from $N$ to $S$ is contractible, after deleting
part of its intersection with the cycle $C$, we get a contractible
curve $\psi$ that intersects itself only at finitely many points. 
The row $r_j$ must cross either $c_i^+$ or $c_{i+1}^+$ (depending on the parity)
in their common vertex (it cannot only touch it as their common vertex has
prescribed local rotation). We may extend $r_j^+$ into a closed curve
$\varphi$ by following closely along the cycle $C$.
This way we are adding two (or zero) intersections with~$\psi$. 
By Lemma~\ref{l:twoclosedcurves} curves $\varphi$ and $\psi$ have an
even number of intersection, thus $r_j$ must have another crossing with~$\psi$
and we are done.
The same argument holds when the rows and columns exchange their roles.

\mytit{Corollary:}{If $r_1^+$ and $r_2^+$ are homotopic, we are done,
  as there are at least $n$ intersections.}

\bigskip

In light of Lemma \ref{finite_hamburger} we may assume that our
drawing is of type $A$, $B$, $C$, $C'$, $D$, $E$, $E'$, $E''$, or $E'''$, and we now
split our argument into these nine cases.

\mytit{Case 1:}{Type $A$.}

Let us first suppose that $n \ge 4$. If there exists $1 \le i \le n$
so that $c_i^+$ is homotopic to $c_0^+$, then either $c_1$ crosses
$c_i$, or $c_1^+$ is homotopic to $c_0^+$. In the latter case, $c_1$
crosses $r_1$. So, in short, $Cr(c_1 \mid H_n) \ge 1$ and by
removing this column and applying induction, we deduce that there
are at least $n-1$ crossings in our drawing. Note here that the
resulting drawing of $H_{n-1}$ is still of type $A$, so it must have
at least $(n-1)-1$ crossings, even if $n=5$. Thus, we may assume that
$c_i^+$ is not homotopic to $c_0^+$ for any $1 \le i \le n$. By a
similar argument, $c_i^+$ is not homotopic to $c_{n+1}^+$.  If there
exist $i,j\in\{1,\dots,n\}$ with $c_i^+$ not homotopic to $c_j^+$,
then $c_i^+$ and $c_j^+$ cross (Lemma~\ref{l:fact}), and further,
$Cr(c_k \mid c_i\cup c_j) \ge 1$ for every $k \in \{1,\ldots,n\}$
with $k \neq i,j$. This implies that we have at least $n-1$
crossings, as desired.  The only other possibility is that $c_i^+$
and $c_j^+$ are homotopic for every $i,j \in \{1,\ldots,n\}$.  In
this case, it follows from the Claim (applied to $c_1^+$ and $c_2^+$,
$c_3^+$ and $c_4^+$, $\ldots$) that there are at least $n-1$
crossings.

Suppose now that $n = 3$. If $c_2^+$ is homotopic to $c_1^+$ or
$c_3^+$, then it follows from the Claim that each row has at least one
crossing, and we are done. Thus, we may assume that $c_2^+$ has
distinct homotopy type from that of $c_1^+$ and from that of
$c_3^+$.  If $c_2^+$ is homotopic to $c_0^+$, then $Cr( c_2 \mid
r_2) \ge 1$ and $Cr( c_2 \mid c_1 ) \ge 2$ (since $c_1^+$ is not
homotopic to $c_2^+$) giving us too many crossings.  Thus, $c_2^+$
is not homotopic to $c_0^+$, and by a similar argument, we find
that $c_2^+$ is not homotopic to $c_4^+$.  Now, either $c_1^+$ is
homotopic to $c_0^+$ (in which case $Cr(c_1 \mid r_1) \ge 1$) or
$c_1^+$ is not homotopic to $c_0^+$ (in which case $Cr( c_1 \mid
c_2 ) \ge 1$).  So, in short $Cr( c_1 \mid r_1 \cup c_2) \ge 1$.  By
a similar argument, $Cr( c_3 \mid r_1 \cup c_2) \ge 1$.  Since there
are at most two crossings, we must have $Cr( c_1 \cup c_3 \mid r_1
\cup c_2) = 2$ and this accounts for all of our crossings.  In
particular, this implies that $r_1$ and $r_2$ are simple curves.
 Since $Cr ( r_2 \mid G) = 0$, it follows that $r_2^+$ is not
homotopic to $r_0^+$ or $r_3^+$.  By the Claim, $r_1^+$ is not
homotopic to $r_2^+$, and this together with $Cr(r_1 \mid r_2)=0$
implies that $r_1^+$ is homotopic to $r_0^+$.  It follows from
this that $Cr( r_1 \mid c_i ) = 1$ for $i=1,3$ and this accounts for
all of the crossings.  Such a drawing is possible, but must be
equivalent with that in Figure~\ref{fig:excH3}(a).


\bigskip

In all the remaining cases, we have that $\s'$ is a cylinder, and in
our figures we have drawn $\s'$ with the boundary component $C^1$ on
the top and $C^2$ on the bottom.


\mytit{Case 2:}{Type $B$.}

Here all of the column curves $c_i^+$ have ends $N^2$ and $S^2$.
Recall that these are copies of $N$ and $S$ drawn at the ``bottom copy''
$C^2$ of $C$. Since all of these curves are simple, it follows that for
every $1 \le i \le n$, the curve $c_i^+$ is either homotopic to
  the simple curve $N^2$--$W^2$--$S^2$ in $C^2$ (we shall call this
  homotopy type~$\ell$),
  or to the simple curve $N^2$--$E^2$--$S^2$ in $C^2$ (homotopy type~$r$).
Let ${\bf a} = a_1 a_2 \ldots a_n$ be the word given by the
rule that $a_i$ is the homotopy type of $c_i^+$.
We now have the following simple crossing property.

\begin{enumerate}[label=P\arabic{*}., ref=P\arabic{*}]
\item \label{cro_1} If $a_i = r$ and $a_j = \ell$ where $1 \le i < j \le n$,
then $Cr(c_i \mid c_j) \ge 2$.
\end{enumerate}

If there exists an $i$ ($1 \le i \le n$) so that $Cr( c_i \mid H_n) \ge 4$,
then $n \ge 5$ (otherwise the drawing is not optimal), and by
removing $c_i$ and either $c_{i-1}$ or $c_{i+1}$ and applying the
theorem inductively to the resulting graph, we deduce that there are
at least $4 + cr_1(H_{n-2}) \ge n$ crossings in our drawing, a
contradiction. It follows from this and \ref{cro_1}, that either
${\bf a} = \ell^i r^{n-i}$ or ${\bf a} = \ell^i r \ell r^{n-i-2}$.
We now split into subcases depending on $n$.

Suppose first that $n = 3$.
If $a_1 = a_2 = \ell$ or $a_2 = a_3 = r$, then it follows from
the Claim that $Cr( r_j \mid c_1\cup c_2\cup c_3 ) \ge 1$ for $j=1,2$ and we are
finished.  Otherwise, ${\bf a}$ must be $\ell r \ell$ or $r \ell r$
and $Cr( c_2 \mid c_1\cup c_3) \ge 2$. These configurations are
possible, but require that our drawing is equivalent with the one
in Figure~\ref{fig:excH3}(b)---this comes from ${\bf a} = \ell r \ell$,
if ${\bf a} = r \ell r$ we get a mirror image.

Next we consider the case when $n = 4$ and ${\bf a} = \ell^i
r^{4-i}$. Applying the Claim for the columns $c_1,c_2$ and $c_3,c_4$
resolves the cases when ${\bf a}$ is one of $\ell^4$, $r^4$, or
$\ell^2 r^2$ (each gives at least four crossings---a contradiction).
Suppose that ${\bf a} = \ell^3 r$ (or, with the same argument, ${\bf
a} = \ell r^3$). It follows from the Claim that $Cr( c_1\cup c_2
\mid r_1\cup r_2 ) \ge 2$ and $Cr( c_2\cup c_3 \mid r_1\cup r_2) \ge
2$, so the only possibility for fewer than three crossings is that
our drawing has 2 crossings, both of which are between $c_2$ and the
rows $r_1$ and $r_2$.   But then $c_2$ does not cross $c_1$ or
$c_3$, so $c_2$ is separated from $c_0$ by $c_1^+ \cup c_3^+$, so
$Cr(r_1 \mid c_1 \cup c_3) > 0$, a contradiction.

Next suppose that $n=4$ and ${\bf a} = \ell^i r \ell r^{2-i}$.  If
${\bf a} = \ell^2 r \ell$, then it follows from \ref{cro_1} that $Cr
( c_3 \mid c_4 ) \ge 2$ and from the Claim that $Cr ( c_1\cup c_2
\mid r_1\cup r_2 ) \ge 2$, so we have at least four crossings---a
contradiction. Similarly ${\bf a} = r \ell r^2$ is impossible.  The
only remaining possibility is ${\bf a} = \ell r \ell r$.  In this
case, we have $Cr( c_2 \mid c_3) \ge 2$, so the only possibility is
that there are exactly two crossings, both between $c_2$ and $c_3$.
This case can be realized, but requires that our drawing is
equivalent to that of Figure~\ref{fig:excH4}.

Lastly, suppose that $n \ge 5$. Since ${\bf a} \in  \{\ell^i
r^{n-i}, \ell^i r \ell r^{n-i-2} \}$, either $a_1 = a_2 = \ell$ or
$a_{n-1} = a_n = r$.  As these arguments are similar, we shall
consider only the former case. Now, it follows from the Claim that
$Cr( c_1 \cup c_2 \mid r_1 \cup r_2 ) \ge 2$, so removing the first
two columns gives us a drawing of $H_{n-2}$ with at least two
crossings less than in our present drawing of $H_n$.  By applying
our theorem inductively to this new drawing, we find that the only
possibility for less than $n-1$ crossings is that $n=6$ and ${\bf a}
= \ell^3 r \ell r$.  In this case, we have $Cr( c_4 \mid c_5) \ge
2$, so we may eliminate two crossings by removing columns 4 and 5.
This leaves us with a drawing of a graph isomorphic to $H_4$ as
above with the pattern $\ell^3 r$.  It follows from our earlier
analysis, that this drawing has at least three crossings. This
completes the proof of this case.

\mytit{Case 3:}{Type $C$.}

Now each column curve has one end on the segment of $C^2$ between $q^2$ and
$r^2$. As above, every curve $c_i^+$ with both ends on $C^2$ must be homotopic
with either
  the simple curve $N^2$--$W^2$--$S^2$ in $C^2$ (denoted by $\ell$),
  or  with the simple curve $N^2$--$E^2$--$S^2$ in $C^2$ (homotopy type $r$).
Each row has both its ends on~$C^2$.

The homotopy types of the other column curves will be represented by
integers. Since $\s'$ is a cylinder, we may choose a continuous
deformation $\Psi$ of $\s'$ onto the circle $\ss^1$ with the
property that $C^1$ and $C^2$ map bijectively to $\ss^1$, and $N^2$
and $S^1$ map to the same point $x\in \ss^1$. Now, each curve
$c_i^+$ maps to a closed curve in $\ss^1$ from~$x$ to~$x$, and for
an integer $\alpha \in \Zet$, we say that $c_i^+$ has homotopy type
$\alpha$ if the corresponding curve in $\ss^1$ has
(counterclockwise) winding number $\alpha$.  It follows that $c_i^+$
and $c_j^+$ are homotopic if and only if they have the same homotopy
type. As before, we let ${\bf a} = a_1 a_2 \ldots a_n$ be the word
given by the rule that $a_i$ is the homotopy type of $c_i^+$.  We
now have the following crossing properties (for the
appropriate choice of ``clockwise'' direction), whenever $1 \le i < j \le n$:

\begin{enumerate}[label=P\arabic{*}., ref=P\arabic{*}]
\item \label{cro_2} $Cr( c_i \mid c_j ) \ge | a_i - a_j - 1 |$
                    if $a_i, a_j \in \Zet$.
\item \label{cro_3} $Cr( c_i \mid c_j) \ge 2$ if $a_i = r$ and $a_j = \ell$.
\item \label{cro_4} $Cr( c_i \mid c_j) \ge 1$ if
                      either $a_i = r$ and $a_j \in \Zet$
                      or $a_i \in \Zet$ and $a_j = \ell$.
\end{enumerate}

By choosing $\Psi$ appropriately, we may further assume that the
smallest integer $1 \le i \le n$ for which $a_i \in \Zet$ (if such $i$ exists)
satisfies $a_i = 0$. Again, we split into subcases depending on $n$.

Suppose first that $n=3$.  Note that every column of type $r$ or
$\ell$ separates the segment $q^2 t^2$ on $C^2$ from $r^2 s^2$.
Consequently, $Cr( r_1\cup r_2 \mid c_i ) \ge 1$ whenever $a_i \in
\{\ell,r\}$.  Next we shall consider the homotopy types of our rows.
If $r_1^+$ is not homotopic to $r_0^+$ or $r_3^+$, then $Cr(r_1
\mid r_1) \ge 1$ and further $Cr(r_1 \mid c_1 \cup c_3) \ge 2$ (as in
this case, $r_1$~separates $C^2$ from~$C^1$ and also segment
$q^2 r^2$ from $s^2 t^2$) which gives us too many crossings.
If $r_2^+$ is not homotopic to $r_0^+$ or $r_3^+$, then $Cr(r_2
\mid r_2) \ge 1$ and $Cr(r_2 \mid c_2) \ge 1$, and we have nothing
left to prove.  Thus, we may assume that $r_1^+$ (and also $r_2^+$) is
homotopic to one of $r_0^+$, $r_3^+$.  If $r_1^+$ and $r_2^+$ are
homotopic, then the Claim implies that there are at least three
crossings.  Hence, we may assume that $r_1^+$ is homotopic to
$r_0^+$ and $r_2^+$ to $r_3^+$ (the other possibility yields two
crossings and each row crossed).  It now follows from our
assumptions that $Cr(r_1 \mid c_i) \ge 1$ for $i=1,3$, so assuming
we have at most two crossings, our only crossings are between $r_1$
and $c_1$ and between $r_1$ and $c_3$.  If $a_i \in \Zet$ for $i \in
\{1,3\}$, then $c_i$ also crosses $r_2$ because of the requirements
concerning local rotations at the special vertices $u_1'$ and
$u_3'$.  It follows that there are at least three crossings unless
${\bf a} = \ell 0 \ell$, $\ell 0 r$, $r 0 \ell$, or $r 0 r$.  Each
of these, except $\ell 0 r$ gives at least three crossings by
(\ref{cro_4}). The remaining case is possible, but only as it
appears in Figure~\ref{fig:excH3}(c).

Suppose now that $n \ge 4$. If either $c_1$ or~$c_n$ is crossed,
then we delete it and use the induction hypothesis. If neither has a
crossing, then both $a_1$ and~$a_n$ are integers (otherwise
$Cr(c_1\cup c_n \mid r_1\cup r_2) \ge 1$ as above). It follows that
$a_1=0$, and $a_n=-1$ (otherwise $c_1$ and~$c_n$ cross). Now there
is no value for $a_2$ to avoid crossing with either $c_1$ or $c_n$.
Hence one of $c_1$, and $c_n$ is crossed, after all, and
we may use induction. This completes the proof of Case~3.

\mytit{Case 4:}{Type $C'$.}

This case is nearly identical to the previous one. We may define
the homotopy types for the columns to be $r$, $\ell$, or an integer,
exactly as before, so that the same homotopy properties are
satisfied. Then the analysis for $n \ge 4$ is identical, and the
only difference is the case when $n=3$.  As before, if $r_1^+$ is
not homotopic to $r_0^+$ or $r_3^+$, then $Cr(r_1 \mid r_1) \ge 1$
and $Cr(r_1 \mid c_1 \cup c_3) \ge 2$ giving us too many crossings.
Similarly, if $r_2^+$ is not homotopic to $r_0^+$ or $r_3^+$, then
$Cr(r_2 \mid r_2) \ge 1$ and $Cr(r_2 \mid c_2) \ge 1$ and there is
nothing left to prove.  Now, using the Claim, we deduce that $r_1^+$
is homotopic to $r_0^+$ and $r_2^+$ is homotopic to $r_3^+$. It
follows from this that $Cr( c_2 \mid r_2 ) \ge 1$.  If $a_2 \in {\mathbb Z}$ 
then, as the vertex $v_2'$ is rigid, it follows
that $Cr(c_2 \mid r_1) \ge 1$ and we have nothing left to prove.
Thus, we may assume that $a_2 \in \{\ell,r\}$.  If $a_i \in
\{\ell,r\}$ for $i=1$ or $i=3$, then $c_i$ crosses $r_1$ and we are
done.  Thus, we may assume that $a_1, a_3\in \Zet$.  It now follows
that $Cr(c_2 \mid c_1 \cup c_3) \ge 1$.  
This can be realized with exactly two crossings, but row $r_2$
must be crossed.



\mytit{Case 5:}{Type $D$.}

In this case, every column has one end on $r_0^2$ and one end on
$r_3^1$. We define the homotopy types of curves $c_i^+$ using
integers as in the previous case. Again, $c_i^+$ and $c_j^+$ are
homotopic if and only if they have the same homotopy type.  As
before, we let ${\bf a} = a_1 a_2 \ldots a_n$ be the word given by
the rule that $a_i$ is the homotopy type of $c_i^+$. And as before,
we have the following useful crossing property:

\begin{enumerate}[label=P\arabic{*}., ref=P\arabic{*}]
\item \label{cro_6}
   $Cr( c_i \mid c_j ) \ge |a_i - a_j - 1|$ if $1 \le i < j \le n $.
\end{enumerate}

Suppose first that $n \ge 4$.
If the first column $c_1$ does not cross any other columns, then
${\bf a} = 0(-1)^{n-1}$. Similarly, if the last column does not cross any other
columns, then ${\bf a} = 0^{n-1}(-1)$. Since these cases are mutually
exclusive for $n \ge 4$, either the first, or the last column contains a crossing.  
Then we may remove it and apply induction.

If $n = 3$, we proceed as follows.
Using~\ref{cro_6} (and the convention $a_1 = 0$) we get that
the number of crossings between the columns is at least
$
  |a_2 + 1| + |a_3 + 1| + |a_2 - a_3 - 1| \ge |a_2+1| + |a_2|
$
(using the triangle inequality). Symmetrically, we get another
lower bound for the number of crossings: $|a_3 + 1| + |a_3 + 2|$.
If any of these bounds is at least~3, we are done. It follows
that $a_2 \in \{0, -1\}$ and $a_3 \in \{-1, -2\}$.
Now, if there are two consecutive columns with the same homotopy type,
then each row will cross some of these columns, and we are done.
Consequently ${\bf a} = 0, -1, -2$.  It follows that
$Cr( c_1 \mid c_3 ) \ge 1$. If $c_2$ crossed either $c_1$ or $c_3$, then it
would have to cross the column twice---which would yield too many
crossings. Similarly, if
$Cr( c_1 \mid c_3) > 1$, then $Cr( c_1 \mid c_3) \ge 3$ and we would have too
many crossings.  It follows that the three columns $c_1$, $c_2$, $c_3$ are
drawn as in Figure~\ref{fig:typeD}. Now we have that $c_1$ and $c_3$ separate
$c_2$ from $c_0^1$, $c_0^2$, $c_{n+1}^1$, and $c_{n+1}^2$.  It
follows that $Cr( r_1 \mid c_1 \cup c_3 ) \ge 2$ giving us too many
crossings.

\begin {figure}
\centerline{\includegraphics[width=3.6cm]{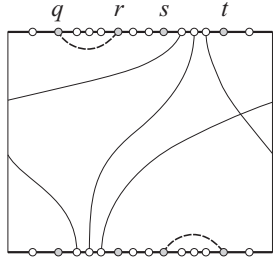}}
\caption{Part of a type~$D$ drawing of~$H_3$.}
\label{fig:typeD}
\end {figure}

\mytit{Case 6:}{Type $E$.}

In this case, every curve $c_i^+$ must have one end in $r_3^2$
and the other end in either $r_0^1$ or $r_0^2$.  In the first case,
we say that $c_i^+$ has homotopy type $0$ and in the second we
say it has type $\ell$.  It is immediate that any two such curves of the same
type are homotopic.  As usual, we let ${\bf a} = a_1a_2 \ldots a_n$ be
the word given by the rule that $a_i$ is the homotopy type of
$c_i^+$.  The following rule indicates some forced crossing behavior.

\begin{enumerate}[label=P\arabic{*}., ref=P\arabic{*}]
  \item \label{cro_7} $Cr( c_i \mid c_j ) \ge 1$ if $a_i = 0$
                      and $1 \le i < j \le n$.
\end{enumerate}

Let us first treat the case when $n \ge 4$.
If the last column $c_n$ contains at least one crossing, then we may
remove it and apply induction.  Otherwise,
(\ref{cro_7}) implies that ${\bf a} = \ell^n$ or ${\bf a} = \ell^{n-1}0$.  
It follows from the Claim that $Cr( c_1\cup c_2 \mid r_1\cup r_2 ) \ge 2$.
Thus, if $n \ge 5$, we may remove the first two columns and apply induction.
If $n = 4$ and ${\bf a} = \ell^4$, then the Claim
gives us at least four crossings---a contradiction with the minimality
of our drawing. It remains to check ${\bf a} = \ell^3 0$.  If there are fewer than three
crossings, then (again by applying the Claim twice) there are exactly two, and
both occur on $c_2$. However, in this case $Cr(r_1 \mid c_3) = 0$.
As $c_3$ separates $c_2$ from both $r^1s^1$ and $r^2s^2$ 
and $r_1$ has a common vertex with $c_2$, we get a contradiction.

\begin {figure}
\centerline{\includegraphics{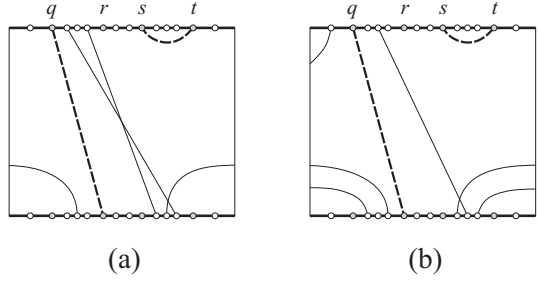}}
\caption{Towards type~$E$ drawings of~$H_3$.}
\label{fig:typeE}
\end {figure}

Finally, suppose that $n = 3$. If there are two consecutive columns
with the same homotopy type, then we are finished (by the Claim), so
we may assume ${\bf a} = 0\ell 0$ or ${\bf a} = \ell 0\ell$.  In the
former case, we have $Cr( c_1 \mid c_2\cup c_3 ) \ge 2$, so we may
assume that there are exactly two crossings, and the columns must be
drawn as in Figure~\ref{fig:typeE}(a). However, it is impossible to
complete this drawing to a drawing of $H_3$ with fewer than three
crossings. 

In the case ${\bf a} = \ell 0 \ell$ we have $Cr( c_2 \mid c_3) \ge 1$ 
(see Figure~\ref{fig:typeE}(b))
and the total number of crossings is at most two.  
If $r_2$ is crossed, then the drawing is not exceptional 
and we are done. There is a unique way to add $r_2$ to 
Figure~\ref{fig:typeE}(b) without creating any new crossing.
Then there is no way to add $r_1$ without crossing $r_2$.


\mytit{Case 7:}{Type $E'$.}

This case is very close to the previous one.  A similar analysis
reduces the problem to the case when $n=3$.  This case is actually
identical to the above: By reflecting both the torus pictured in
$E'$ and the standard drawing of $H_3$ (as in Figure 1) about a
vertical symmetry axis we find ourselves in this previous case.

\mytit{Case 8:}{Type $E''$.}

This case is somewhat similar to that of Type $E$.  We may define the
homotopy types for the columns $0$, $\ell$ exactly as before, so
that the crossing property (P1) from Type $E$ is satisfied.  
Then the analysis
for $n \ge 4$ is identical, and the only difference is the case when
$n=3$.  As before, if there are two consecutive columns with the
same homotopy type, we are finished. Thus we may assume that ${\bf
a} = 0 \ell 0$ or ${\bf a} = \ell 0 \ell$.  Then we get another
drawing of $H_3$ with two crossings, but again, 
in this case $r_1$ and $r_2$ cross each other.

\mytit{Case 9:}{Type $E'''$.}

This case is essentially the same as the previous one,
in the same way as type $E'$ was related to $E$.
This completes the proof of Lemma \ref{ham_lem}.
\end{proof}

Next we bootstrap to the following Lemma.

\begin{lemma}\label{lem x2}
The graph $H_{n,k}$ has crossing sequence $(n+k,n-1,0)$ for every $n
\ge 3$ and $k \ge 0$ with the exception of $n=4$ and $k=0$.
\end{lemma}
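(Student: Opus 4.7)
The plan is to combine Lemmas~\ref{ham_plane} and~\ref{ham_2tor} (which already give $cr_0(H_{n,k})=n+k$ and $cr_2(H_{n,k})=0$) with a proof that $cr_1(H_{n,k}) = n-1$ outside the excluded case. For the upper bound $cr_1(H_{n,k}) \le n-1$, I would modify the standard plane drawing realizing $n+k$ crossings (Lemma~\ref{ham_plane}). In that drawing, the $k+1$ columns $c_2, c_{2,1}, \dots, c_{2,k}$ lie parallel between the positions of $c_1$ and $c_3$, and each crosses the same single edge of row $r_2$ (the edge joining the special vertices of $c_1$ and $c_3$). Attaching one handle along that edge allows it to detour around all $k+1$ parallel crossings simultaneously, leaving exactly $n+k-(k+1)=n-1$ crossings on the resulting torus drawing.

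For the lower bound in the easy case $n=3$ or $n \ge 5$, observe that $H_n$ sits inside $H_{n,k}$ (up to harmless degree-$2$ subdivisions of $C$ introduced by the duplicate column endpoints); restricting a torus drawing of $H_{n,k}$ to this subgraph cannot create crossings, and Lemma~\ref{ham_lem} yields $cr_1(H_{n,k}) \ge cr_1(H_n) = n-1$. This cheap argument fails for $n=4$ precisely because $cr_1(H_4)=2<3$, which is exactly the reason $n=4$, $k=0$ must be excluded.

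The interesting case is $n=4$, $k \ge 1$. Suppose for contradiction that a torus drawing of $H_{4,k}$ uses only $2$ crossings. Deleting the $k$ duplicate columns yields a drawing of $H_4$ with at most $2$ crossings, and by the uniqueness clause of Lemma~\ref{ham_lem} this must be the exceptional drawing of Figure~\ref{fig:excH4}: of Type~$B$, with column pattern ${\bf a} = \ell r \ell r$ and both crossings between $c_2$ and $c_3$. Consequently the original $H_{4,k}$ drawing is itself of Type~$B$, its two crossings are accounted for by $c_2\times c_3$, and every duplicate column $c_{2,j}$ is crossing-free. But each $c_{2,j}$ still has a homotopy type $a_{2,j}\in\{\ell,r\}$ in the cut-open cylinder, and, because it occupies a position in the cyclic ordering of columns strictly between $c_2$ and $c_3$, property~P1 from Case~2 of the proof of Lemma~\ref{ham_lem} applies: if $a_{2,j}=\ell$ then the pair $(c_2, c_{2,j})$ has types $(r,\ell)$ in cyclic order and so crosses at least twice, while if $a_{2,j}=r$ then the pair $(c_{2,j}, c_3)$ has types $(r,\ell)$ in cyclic order and so crosses at least twice. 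Either possibility contradicts the crossing-freeness of the duplicates.

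The main obstacle is the $n=4$, $k\ge 1$ case: one must be sure that an optimal $2$-crossing drawing of $H_{4,k}$ is forced to restrict to the unique exceptional drawing of $H_4$, and then that property~P1 can legitimately be applied between a duplicate and either $c_2$ or $c_3$. This hinges on the duplicates occupying distinct positions in the cyclic ordering of columns on $C$ (so that the hypothesis $i<j$ of P1 is meaningful), and on the rigidity of the exceptional $H_4$ drawing; once both are in place, the homotopy-type dichotomy for each duplicate delivers the required contradiction.
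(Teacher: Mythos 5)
Your proposal is correct and follows essentially the same route as the paper: $cr_0$ and $cr_2$ come from Lemmas~\ref{ham_plane} and~\ref{ham_2tor}, the upper bound $cr_1\le n-1$ from adding a handle to the planar drawing, the lower bound for $n=3$ or $n\ge 5$ from deleting the duplicate columns and invoking Lemma~\ref{ham_lem}, and the case $n=4$, $k\ge 1$ from the uniqueness of the exceptional drawing of Figure~\ref{fig:excH4}. The only difference is that the paper dismisses the final step (``the exceptional drawing does not extend to $H_{4,1}$'') as a routine check while you make it explicit via property P1; your case split there tacitly assumes each duplicate attaches between $c_2$ and $c_3$, but if a duplicate sits on the other side of $c_2$ the type-$\ell$ subcase is instead handled by the Claim (two homotopic adjacent columns force a crossing on each row), so the argument still closes.
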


\begin{proof}
Lemmas \ref{ham_2tor} and \ref{ham_plane} show that 
$cr_0(H_{n,k}) = n+k$ and $cr_2(H_{n,k}) = 0$. We can draw $H_{n,k}$ 
in the torus with $n-1$ crossings by adding a handle to the drawing from 
Figure~\ref{fig:Hnk}. It remains to show that $cr_1(H_{n,k}) \ge n-1$
(for $n\ge 3$, unless $n=4$ and $k=0$).
Take a drawing of $H_{n,k}$ in the torus. By removing the $k$ extra
columns we obtain a drawing of $H_{n,0}$ in the torus, which 
(by Lemma~\ref{ham_lem}) has $\ge n-1$ crossings, unless $n=4$.
This completes the proof in all cases except when $n=4$.

If $n=4$, the same argument as above shows that $cr_1(H_{4,k}) \ge
cr_1(H_{4,1})$; we shall prove now that $cr_1(H_{4,1}) \ge 3$.
Suppose this is false, and consider a drawing of $H_{4,1}$ in the
torus with at most two crossings.  By removing the added column, we
obtain a drawing of $H_4$ in the torus with at most two crossings.
It follows from Lemma \ref{ham_lem} that this drawing is equivalent
to that in Figure \ref{fig:excH4}.  Since this drawing does not
extend to a drawing of $H_{4,1}$ with $\le 2$ crossings, this gives
us a contradiction.

Thus $H_{n,k}$ (for $(n,k) \ne (4,0)$), has crossing sequence
$(n+k,n-1,0)$ as claimed.
\end{proof}

Next we introduce one additional graph to get the crossing sequence
$(4,3,0)$.  We define the graph $H_3^+$ in the same way as $H_3$
except that we have three rows instead of two. See
Figure~\ref{fig:H3+}.

\begin {figure}
\centerline{\includegraphics[width=3.2cm]{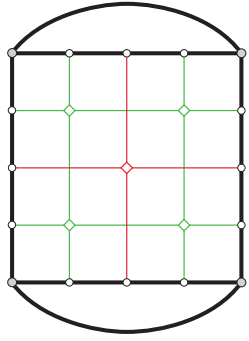}}
\caption{The special graph $H_3^+$.}
\label{fig:H3+}
\end {figure}

\begin{lemma} \label{lem_x3}
The graph $H_3^+$ has crossing sequence $(4,3,0)$
\end{lemma}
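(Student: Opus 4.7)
The plan is to verify each of $cr_0(H_3^+) = 4$, $cr_1(H_3^+) = 3$, and $cr_2(H_3^+) = 0$ in turn. The equality $cr_2(H_3^+) = 0$ follows exactly as in Lemma \ref{ham_2tor}: removing the two thick edges $\tau_0, \tau_1$ leaves a graph that embeds in the sphere, and the two handles of $\ss_2$ then suffice to reroute $\tau_0$ and $\tau_1$ without crossings.

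The upper bounds $cr_0(H_3^+) \le 4$ and $cr_1(H_3^+) \le 3$ are witnessed by the natural planar drawing of $H_3^+$ (analogous to Figure \ref{fig:Hn}), which has four crossings, and by the standard handle-addition trick to eliminate one crossing on the torus. For the lower bound $cr_0(H_3^+) \ge 4$, I would follow the template of Lemma \ref{ham_plane}: in any sphere drawing with finite crossing number, $C$ is a simple separating curve, both $\tau_0$ and $\tau_1$ are forced into the same disc, and the row-column crossings on the other side account for at least four crossings, the count being one more than in $H_3$ because of the extra row.

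The substantive step is the lower bound $cr_1(H_3^+) \ge 3$. My plan is to argue by contradiction: suppose $\calD$ is a torus drawing of $H_3^+$ with at most two crossings. Since $H_3^+$ contains (a subdivision of) $H_3$ as a subgraph obtained by deleting the extra row, restricting $\calD$ gives a drawing of $H_3$ on the torus with at most two crossings. With only two crossings distributed among the three rows of $H_3^+$, at least one row is crossing-free, and by choosing the deleted row appropriately and relabeling we may arrange that the induced drawing of $H_3$ satisfies $Cr(r_2 \mid G) = 0$. The second statement of Lemma \ref{ham_lem} then forces this induced drawing to be equivalent to one of those shown in Figure \ref{fig:excH3}(a)--(c').

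The main obstacle is the final case analysis: for each of these five exceptional drawings I must verify that reinserting the deleted row (respecting its prescribed local rotations and attachment points on $C$) forces at least one additional crossing, giving at least three crossings in total and contradicting the assumption. This should follow by inspection of each configuration: the topological ``room'' in the torus is essentially spent attaining the two crossings already visible in Figure \ref{fig:excH3}, so the only arcs available for routing the new row between its two attachment points on $C$ are blocked by some column or thick edge of the existing drawing, forcing at least one new crossing. I expect this to be the only delicate part of the argument, but manageable given how few and how rigid the exceptional drawings are.
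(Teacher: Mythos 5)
Your overall strategy coincides with the paper's: handle $cr_0$ and $cr_2$ as in Lemmas \ref{ham_plane} and \ref{ham_2tor}, and for $cr_1(H_3^+)\ge 3$ delete a row from a hypothetical $\le 2$-crossing torus drawing so as to land in the exceptional drawings classified in Lemma \ref{ham_lem}, then check that none of them extends. The genuine gap is in how you arrange $Cr(r_2 \mid G)=0$ for the induced $H_3$. The counting claim ``two crossings distributed among three rows leave some row crossing-free'' is false as stated: a single crossing may involve two row edges, so two crossings can touch all three rows (for instance one row crossing another, plus the third row crossing a column). Moreover, even a corrected version would not suffice: the second statement of Lemma \ref{ham_lem} is specifically about the row $r_2$, and $r_1$ and $r_2$ are not interchangeable in $H_3$ ($r_1$ carries one special vertex, $r_2$ carries two), so you must ensure that the crossing-free row both survives the deletion and occupies the $r_2$ position in the resulting subdivision of $H_3$ --- and not every row of $H_3^+$ can be deleted so as to leave a subdivision of $H_3$ in the first place, which limits the ``relabeling'' freedom you invoke.

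The repair uses a tool you already have in hand. If the first row of $H_3^+$ contained a crossing, deleting it would yield a drawing of a subdivision of $H_3$ with at most one crossing, contradicting $cr_1(H_3)=2$ from Lemma \ref{ham_lem}; the same applies to the third row. Hence both outer rows are crossing-free, and deleting the first row leaves a two-crossing drawing of $H_3$ in which the surviving outer row is precisely the crossing-free $r_2$, so the classification of Figure \ref{fig:excH3}(a)--(c') applies. With that substitution your argument, including the final inspection of the exceptional drawings (which the paper likewise leaves as a routine check), matches the paper's proof.
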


\begin{proof}
It follows from an argument as in Lemma \ref{ham_plane} that
$cr_0(H_3^+)=4$.  Since $H_3^+ - \tau_0 - \tau_1$ is planar, it
follows that $cr_2(H_3^+)=0$. It remains to show that $cr_1(H_3^+) =
3$.  Since $cr_1(H_3^+) \le 3$, we need only to show the reverse
inequality. Consider an optimal drawing of $H_3^+$ in the torus, and
suppose (for a contradiction) that it has fewer than three
crossings.  If the first row contains a crossing, then by removing
its edges, we obtain a drawing of a subdivision of $H_3$ in the
torus with at most one crossing---a contradiction. Thus, the first
row must not have a crossing, and by a similar argument, the third
row must not have a crossing.  Now, we again remove the first row.
This leaves us with a drawing of a subdivision of $H_3$ in the torus
with at most two crossings, and with the added property that one row
($r_2$ in this $H_3$)
has no crossings.  By Lemma \ref{ham_lem} this must be a drawing as
in Figure \ref{fig:excH3}.  A routine check of these drawings shows
that none of them can be extended to a drawing of $H_3^+$ with fewer
than 3 crossings.
\end{proof}

We require one added Lemma for some simple crossing sequences.

\begin{lemma}
For every $a > 1$ there is a graph with crossing sequence $(a,1,0)$.
\end{lemma}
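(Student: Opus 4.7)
My plan is to construct $G_a$ as a disjoint union $G_a = L \cup K_5$, where $L$ is a simple toroidal graph with $cr_0(L) = a - 1$ and $cr_1(L) = 0$. The setup is chosen so that the crossing sequence of $G_a$ falls out of two standard additivity properties: planar crossing number is additive over disjoint unions (by restricting any drawing to each component), and Euler genus is additive over disjoint unions (by the Battle--Harary--Kodama--Youngs theorem).

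To build $L$, I would use the weighted-edge machinery from Section~\ref{sec:gadgets}. Take $K_{3,3}$ as a weighted graph in which one chosen edge $e_0$ has weight $1$ and the other eight edges each have weight $a-1$. Its weighted planar crossing number is exactly $a - 1$: for the upper bound, choose an optimal one-crossing planar drawing of $K_{3,3}$ in which the crossing involves $e_0$ and one of its non-adjacent neighbors (such a drawing exists because the automorphism group of $K_{3,3}$ acts transitively on non-adjacent edge pairs), giving weighted cost $1 \cdot (a-1) = a-1$; for the lower bound, any planar drawing has at least one crossing, and since only $e_0$ is light, any single crossing contributes at least $1 \cdot (a-1) = a-1$ to the weighted total. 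On the torus the weighted crossing number is $0$ since $K_{3,3}$ embeds there. The weighting argument from Section~\ref{sec:gadgets} (replacing each weight-$w$ edge by the standard gadget $L_w$) together with Lemma~\ref{gadget_lem} converts this weighted graph into a simple toroidal graph $L$ realizing the crossing sequence $(a-1, 0)$.

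Finally, verify the crossing sequence of $G_a = L \cup K_5$. Upper bounds: $cr_0(G_a) \le (a-1)+1 = a$ by drawing the two components in disjoint regions of the plane; $cr_1(G_a) \le 1$ by embedding $L$ in the torus without crossings and placing $K_5$ with its unique crossing inside a face of this embedding; and $cr_2(G_a) = 0$ by embedding the two toroidal components in separate handles of $\ss_2$. Lower bounds: $cr_0(G_a) \ge cr_0(L) + cr_0(K_5) = a$ by additivity, and $cr_1(G_a) \ge 1$ because the Euler genus of $G_a$ is $1 + 1 = 2$, which exceeds the genus of the torus. Thus $G_a$ has crossing sequence $(a, 1, 0)$, as required. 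The only substantive step is the weighted planar crossing number computation, which rests on the symmetry of $K_{3,3}$; everything else is routine bookkeeping about disjoint unions.
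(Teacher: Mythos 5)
Your proposal is correct and follows essentially the same route as the paper: a disjoint union of a plain $K_5$ (supplying the single unavoidable crossing on the torus, via additivity of orientable genus over components) with a nonplanar toroidal graph in which one edge is light and all others are heavy, forcing $a-1$ planar crossings that vanish on the torus. The only differences are cosmetic: the paper builds the heavy component from $K_5$ using parallel edges directly, while you build it from $K_{3,3}$ and invoke the weighted-edge gadget of Section~\ref{sec:gadgets} explicitly.
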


\begin{proof}
Let $G_1$ be a copy of $K_5$, let $G_2$ be the graph obtained from a
copy of $K_5$ by replacing each edge, except for one of them, with
$a-1$ parallel edges joining the same pair of vertices. Let $G$ be
the disjoint union of $G_1$ and $G_2$.  It is immediate that
$cr_0(G) = a$, $cr_2(G) = 0$, and $cr_1(G) \ge 1$. A drawing of $G$
in $\ss_1$ with this crossing number is easy to obtain by embedding
$G_2$ in the torus, and then drawing $G_1$ disjoint from $G_2$ with
one crossing.  Thus, $G$ has crossing sequence $(a,1,0)$ as
required.
\end{proof}

\begin{proofof}{Theorem \ref{hamburger}}
Let $(a,b,0)$ be given with integers $a>b>0$ .  If $b=1$, then the
previous lemma shows that there is a graph with crossing sequence
$(a,b,0)$.  If $(a,b,0) = (4,3,0)$ then Lemma \ref{lem_x3} provides
such a graph.  Otherwise, Lemma \ref{lem x2} shows that the graph
$H_{b+1,a-b-1}$ has crossing sequence $(a,b,0)$.
\end{proofof}

\bibliographystyle{rs-amsplain}
\bibliography{cross-seq}

\end{document}